\numberwithin{equation}{section}
\newcommand{\IR}{\mathbb{R}}
\newtheorem{Theorem}{Theorem}[section]
\newtheorem{Proposition}[Theorem]{Proposition}
\newtheorem{Lemma}[Theorem]{Lemma}
\newtheorem{Corollary}[Theorem]{Corollary}
\newcommand{\IN}{\mathbb{N}}
\def\be{\begin{equation}}
\def\ee{\end{equation}}
\def\M{{\bf M}}
\DeclareMathOperator{\Ric}{Ric}
\DeclareMathOperator{\dist}{dist}
\numberwithin{equation}{section}
\begin{document}

\title[Heat kernel and curvature bounds in Ricci flows --- Part II]{Heat kernel and curvature bounds in Ricci flows with bounded scalar curvature --- Part II}
\author{Richard H. Bamler and Qi S. Zhang}
\address{Department of Mathematics, UC Berkeley, Berkeley, CA 94720,  USA}
\email{rbamler@math.berkeley.edu}
\address{Department of Mathematics, University of California, Riverside, CA 92521, USA}
\email{qizhang@math.ucr.edu}
\date{\today}

\begin{abstract}
In this paper we analyze the behavior of the distance function under Ricci flows whose scalar curvature is uniformly bounded.
We will show that on small time-intervals the distance function is $\frac12$-H\"older continuous in a uniform sense.
This implies that the distance function can be extended continuously up to the singular time.
\end{abstract}

\maketitle

\section{Introduction}
In this paper, we extend the estimates of \cite{Bamler-Zhang}, to prove the following result:

\begin{Theorem} \label{Thm:main}
For any $0 < A < \infty$ and $n \in \IN$ there is a constant $C = C (A, n) < \infty$ such that the following holds:

Let $(\M^n, (g_t)_{t \in [0,1]} )$ be a Ricci flow ($\partial_t g_t = - 2 \Ric_{g_t}$) on an $n$-dimensional compact manifold $\M$ with the property that $\nu [g_{0}, 1+A^{-1}] \geq - A$.
Assume that the scalar curvature satisfies $|R| \leq R_0$ on $\M \times [0, 1]$ for some constant $0 \leq R_0 \leq A$.

Then for any $0 \leq t_1\leq t_2 \leq 1$ and $x, y \in \M$ we have the distance bound
\[ d_{t_1} (x,y) - C \sqrt{t_2 - t_1} \leq d_{t_2} (x, y) \\
  \leq \exp \big({  C R_0^{1/2} \sqrt{t_2 - t_1} } \big) d_{t_1} (x, y) + C \sqrt{t_2 - t_1}. \]

In particular, if $\min \{ d_{t_1} (x,y), d_{t_2} (x,y) \} \leq D$ for some $D < \infty$, then
\[ \big| {d_{t_1} (x,y) - d_{t_2} (x,y) } \big| \leq C' \sqrt{t_2 - t_1}, \]
where $C'$ may depend on $A$, $D$ and $n$.
\end{Theorem}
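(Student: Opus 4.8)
The plan is to establish the two inequalities in the distance bound separately, and then derive the "in particular" statement by combining them. The lower bound $d_{t_1}(x,y) - C\sqrt{t_2-t_1} \le d_{t_2}(x,y)$ and the upper bound are somewhat different in character: the lower bound should follow from a forward-in-time control of how much a fixed curve can shrink under the flow, while the upper bound needs a more delicate construction, since distances can genuinely expand where the Ricci curvature is very negative, and the only a priori bound we have is on the scalar curvature, not the full Ricci tensor.

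I'd start with the upper bound, which I expect to be the heart of the matter. Fix $t_1 < t_2$ and points $x,y$, and let $\gamma$ be a minimizing $g_{t_1}$-geodesic from $x$ to $y$. The naive estimate $\frac{d}{dt} \mathrm{length}_{g_t}(\gamma) = -\int_\gamma \Ric_{g_t}(\gamma',\gamma')\,ds$ is useless because $\Ric$ can be very negative. The standard device (going back to Perelman's pseudolocality-type arguments and used heavily in the authors' Part I) is instead to bound the distance distortion using a segment-wise argument: one shows that for points at $g_t$-distance comparable to $\sqrt{t_2-t_1}$, the distance cannot change by more than a controlled multiple of $\sqrt{t_2-t_1}$, using heat-kernel bounds and integral Ricci curvature bounds from \cite{Bamler-Zhang}. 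Concretely, I would invoke the heat kernel upper and lower Gaussian bounds and the $L^p$-bounds on $|\Ric|$ (for $p$ slightly above $2$, or in the appropriate weighted sense) that are available under the hypothesis $\nu[g_0, 1+A^{-1}] \ge -A$ together with $|R| \le R_0$. The key analytic input is a bound of the form: the amount by which $d_t$ can increase over $[t_1,t_2]$ along a curve is controlled by $\int_{t_1}^{t_2} \big( \text{something involving } \int |\Ric| \text{ along a shrinking family of curves} \big)\,dt$, and the $L^p$ Ricci bound makes this integral $O(\sqrt{t_2-t_1})$ after a Hölder-inequality estimate. The multiplicative factor $\exp(CR_0^{1/2}\sqrt{t_2-t_1})$ arises from the part of the Ricci curvature one cannot avoid, namely a $R_0^{1/2}$-scale contribution, handled via a Gronwall-type inequality.

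For the lower bound, I would run the argument in the other time direction: take a minimizing $g_{t_2}$-geodesic $\sigma$ from $x$ to $y$ and estimate $d_{t_1}(x,y) \le \mathrm{length}_{g_{t_1}}(\sigma)$, then control $\mathrm{length}_{g_{t_1}}(\sigma) - \mathrm{length}_{g_{t_2}}(\sigma)$ from above by $C\sqrt{t_2-t_1}$. Going backward in time the troublesome sign flips: now it is the positive part of $\Ric$ that could in principle make the curve longer, but the relevant one-sided bound is again extractable from the $L^p$ Ricci bound plus the scalar curvature bound (note $R \le R_0$ gives a one-sided pointwise bound that helps here), and no exponential factor is needed. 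Here one must be a little careful that $\sigma$ need not remain short in the $g_{t_1}$-metric, so I'd either localize the estimate to the region near $\sigma$ using pseudolocality / $\kappa$-noncollapsing from the $\nu$-bound, or chop $\sigma$ into $g_{t_2}$-segments of length $\sim\sqrt{t_2-t_1}$ and apply the short-distance distortion estimate to each segment, then sum.

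Finally, the "in particular" statement is immediate bookkeeping. If $\min\{d_{t_1}(x,y), d_{t_2}(x,y)\} \le D$, then in the upper bound we may replace $d_{t_1}(x,y)$ by $\min\{d_{t_1},d_{t_2}\} \le D$ after noting that if $d_{t_2} \le d_{t_1}$ there is nothing to prove on that side, so $d_{t_2}(x,y) - d_{t_1}(x,y) \le (\exp(CR_0^{1/2}\sqrt{t_2-t_1})-1)D + C\sqrt{t_2-t_1} \le C'\sqrt{t_2-t_1}$ using $R_0 \le A$ and $e^u - 1 \le C u$ for bounded $u$; combined with the lower bound $d_{t_1} - d_{t_2} \le C\sqrt{t_2-t_1}$ this gives $|d_{t_1}(x,y) - d_{t_2}(x,y)| \le C'\sqrt{t_2-t_1}$ with $C' = C'(A,D,n)$. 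The main obstacle, as indicated, is the upper-bound distance-distortion estimate: making rigorous the claim that an $L^p$ (rather than $L^\infty$) bound on Ricci curvature, available through the heat-kernel estimates of \cite{Bamler-Zhang}, suffices to control distance distortion along minimizing geodesics, which requires carefully tracking how the family of connecting curves evolves and integrating the Ricci bound against the correct measure.
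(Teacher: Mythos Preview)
Your overall strategy---track the length of a fixed minimizing geodesic under the flow and bound $\int_\gamma \Ric(\gamma',\gamma')\,ds$ using integral Ricci estimates---has a genuine gap that you yourself flag at the end but do not resolve. The integral Ricci bounds available from \cite{Bamler-Zhang} are \emph{spacetime} bounds, weighted against the heat kernel (ultimately coming from the identity $\partial_t R = \Delta R + 2|\Ric|^2$, which yields $\int_0^t\int_\M K(x,t;y,s)|\Ric|^2\,dg_s\,ds \le R_0$). A geodesic is a set of measure zero in $\M$, and there is no mechanism to pass from an $L^p$ bound against Riemannian volume to a bound on $\int_\gamma |\Ric|$ along one particular curve. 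Your phrase ``integrating the Ricci bound against the correct measure'' is exactly the missing step, and it is not clear any such measure exists in this framework. The same issue afflicts your lower-bound argument: the remark that ``$R\le R_0$ gives a one-sided pointwise bound that helps here'' is not correct---a scalar curvature upper bound gives no pointwise control on $\Ric$ in any direction.

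The paper avoids this obstruction by a different device: rather than following a geodesic, it evolves the distance function itself by the heat equation. For the lower bound one solves $\partial_t u = \Delta u$ forward from $u(\cdot,t_1)=d_{t_1}(x_1,\cdot)$; then $|\nabla u|\le 1$ persists by the ordinary maximum principle (since $\partial_t|\nabla u|\le\Delta|\nabla u|$ under forward Ricci flow), and Gaussian heat-kernel bounds show $u(x_i,t_2)$ differs from $d_{t_1}(x_1,x_i)$ by $O(\sqrt{t_2-t_1})$. For the upper bound one runs the \emph{backward} heat equation from $d_{t_2}(x_i,\cdot)$; now the gradient evolution picks up a $+4|\Ric|\,|\nabla u|$ term, and the crux is a ``generalized maximum principle'' showing $\max|\nabla u|$ grows by at most $1+CR_0^{1/2}\sqrt{t_2-t_1}$. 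This is exactly where the heat-kernel-weighted bound $\int\!\!\int K\,|\Ric|^2 \le R_0$ enters, via Cauchy--Schwarz and a heat-kernel reversal lemma. The point is that by smearing the distance function with the heat kernel, the Ricci curvature gets integrated against a measure (the heat kernel) for which one \emph{does} have the needed bound. Your derivation of the ``in particular'' statement is fine.
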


By parabolic rescaling, we obtain distance bounds on larger time-intervals.
Note that Theorem \ref{Thm:main} is a generalization of \cite[Theorem 1.1]{Bamler-Zhang}, which only provides a bound on the distance distortion that does not improve for $t_2$ close to $t_1$.
The constant $\nu[g_0, 1+A^{-1}]$ is defined as the infimum of Perelman's $\mu$-functional (cf \cite{PerelmanI}) $\mu[g_0, \tau]$ over all $\tau \in (0,1+A^{-1})$.
For more details see \cite[sec 2]{Bamler-Zhang}.
The condition $\nu [g_0, 1 + A^{-1}] \geq - A$, can be viewed as a non-collapsing condition.
The exponential factor in the upper bound is necessary, as one can see for example in the case in which $(\M, (g_t)_{t \in [0,1]})$ is the Ricci flow on a hyperbolic manifold and the distance between $x,y$ is very large.
The proof of Theorem \ref{Thm:main} will heavily use the results of \cite{Bamler-Zhang}, in particular the heat kernel bound, \cite[Theorem 1.4]{Bamler-Zhang}.

As a consequence of Theorem \ref{Thm:main}, we obtain the following:

\begin{Corollary}
Let $(\M, (g_t)_{t \in [0,T)} )$, $T < \infty$ be a Ricci flow on a compact manifold and assume that the scalar curvature satisfies $R < C < \infty$ on $\M \times [0, T)$.
Then the distance function
\[ d : \M \times \M \times [0,T) \longrightarrow [0, \infty), \qquad (x,y,t) \longmapsto d_t (x,y) \]
can be extended continuously onto the domain $\M \times \M \times [0,T]$.
\end{Corollary}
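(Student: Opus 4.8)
The plan is to first promote Theorem~\ref{Thm:main} to a distance distortion estimate that is valid on the entire interval $[0,T)$ with constants depending only on the given flow, then to extract from it an a priori diameter bound and a modulus of continuity for $t\mapsto d_t(x,y)$ that is uniform in $x,y$, and finally to obtain the continuous extension by a soft argument. Since $R<C$ on $\M\times[0,T)$ and $\min_\M R(\cdot,t)$ is non-decreasing along the flow (maximum principle applied to $\p_t R=\De R+2|\Ric|^2$), we have $|R|\le R_{\max}$ on $\M\times[0,T)$ for some $R_{\max}<\infty$. On a closed manifold $\nu[g_0,\cdot]$ is finite and non-increasing in its second argument, so we may fix $A\ge 1$ with $A\ge TR_{\max}$ and $\nu[g_0,2T]\ge -A$. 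For each $T'\in[T/2,T)$ the parabolic rescaling $\hat g^{T'}_s:=(T')^{-1}g_{T's}$, $s\in[0,1]$, is a Ricci flow on the closed manifold $\M$ whose scalar curvature obeys $T'|R|\le TR_{\max}\le A$ and whose entropy satisfies $\nu[\hat g^{T'}_0,1+A^{-1}]=\nu[g_0,T'(1+A^{-1})]\ge\nu[g_0,2T]\ge -A$ (here $T'(1+A^{-1})<2T$ because $A\ge1$). Applying Theorem~\ref{Thm:main} to each $\hat g^{T'}$ with one and the same constant $C=C(A,n)$, rescaling back, and letting $T'\uparrow T$ gives: for all $0\le t_1\le t_2<T$ and all $x,y\in\M$,
\[ d_{t_1}(x,y)-C\sqrt{t_2-t_1}\ \le\ d_{t_2}(x,y)\ \le\ \exp\big(C\sqrt{R_{\max}}\,\sqrt{t_2-t_1}\big)\,d_{t_1}(x,y)+C\sqrt{t_2-t_1}. \]

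Setting $t_1=0$ and using $t_2<T$, the upper bound yields $\diam(\M,g_{t_2})\le e^{C\sqrt{R_{\max}T}}\diam(\M,g_0)+C\sqrt{T}=:D<\infty$ for every $t_2\in[0,T)$. Substituting $d_{t_1}(x,y)\le D$ back into the estimate above, the two inequalities combine into
\[ \big|d_{t_1}(x,y)-d_{t_2}(x,y)\big|\ \le\ \big(e^{C\sqrt{R_{\max}(t_2-t_1)}}-1\big)D+C\sqrt{t_2-t_1}\ =:\ \omega(t_2-t_1), \]
where $\omega$ is continuous, $\omega(0)=0$, and independent of $x,y$ and of $t_1,t_2\in[0,T)$; this is the uniform form of the last assertion of Theorem~\ref{Thm:main}. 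Consequently, for each fixed $x,y$ the function $t\mapsto d_t(x,y)$ is uniformly continuous on $[0,T)$, hence extends continuously to $[0,T]$; writing $d_T(x,y):=\lim_{t\to T^-}d_t(x,y)$, we also get $|d_t(x,y)-d_T(x,y)|\le\omega(T-t)$ for all $t\in[0,T)$, uniformly in $x,y$. Thus the continuous maps $d_t\colon\M\times\M\to[0,\infty)$ converge uniformly to $d_T$ as $t\to T^-$, so $d_T$ is continuous on $\M\times\M$. Define $\tilde d(x,y,t):=d_t(x,y)$ for $t<T$ and $\tilde d(x,y,T):=d_T(x,y)$. On $\M\times\M\times[0,T)$, $\tilde d$ is continuous because the distance functions of a continuous family of Riemannian metrics on a compact manifold depend jointly continuously on all variables, and at a boundary point $(x_0,y_0,T)$ we have, for any $(x,y,t)$ with $t\le T$,
\[ \big|\tilde d(x,y,t)-\tilde d(x_0,y_0,T)\big|\ \le\ \omega(T-t)+\big|d_T(x,y)-d_T(x_0,y_0)\big|, \]
which tends to $0$ as $(x,y,t)\to(x_0,y_0,T)$ by continuity of $d_T$ and $\omega(0)=0$. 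Hence $\tilde d$ is the desired continuous extension of $d$ to $\M\times\M\times[0,T]$.

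The step I expect to require the most care is the first one: verifying that the hypotheses of Theorem~\ref{Thm:main}---above all the entropy bound $\nu[\,\cdot\,,1+A^{-1}]\ge -A$---are preserved under the parabolic rescaling with a constant $C$ that does \emph{not} deteriorate as $T'\uparrow T$, so that the distance distortion estimate holds uniformly all the way up to the singular time; together with this one needs the a priori diameter bound, so that the ``In particular'' clause of Theorem~\ref{Thm:main} can be applied with a single value of $D$. Once the uniform modulus $\omega$ is in hand, the existence of the continuous extension is essentially formal, relying only on the fact that a uniform limit of continuous functions is continuous.
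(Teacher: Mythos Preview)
Your proof is correct. The paper does not actually give a proof of this corollary; it simply states it ``As a consequence of Theorem~\ref{Thm:main}'' and moves on, so your argument is a careful and accurate fleshing-out of the intended deduction: parabolically rescale to put the flow in the setting of Theorem~\ref{Thm:main} with a single constant $A$ that works uniformly as $T'\uparrow T$, extract a uniform diameter bound, turn the two-sided distance estimate into a uniform modulus of continuity $\omega$, and conclude by the standard uniform-limit argument. The verification of the entropy hypothesis under rescaling, $\nu[(T')^{-1}g_0,1+A^{-1}]=\nu[g_0,T'(1+A^{-1})]\ge\nu[g_0,2T]\ge -A$, is exactly the point that needs checking and you handle it correctly.
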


Note that the corollary does not state that $d_T : \M \times \M \to [0, \infty)$ is a metric on $\M$.
It only follows that $d_T$ is a pseudometric, which means that we may have $d_T (x, y) = 0$ for some $x \neq y$.
After taking the metric identification, however, $(\M /{ \sim}, d_T)$ is in fact the Gromov-Hausdorff limit of $(\M, g_t)$ as $t \nearrow T$.
Here $x \sim y$ if and only if $d_T (x,y) = 0$.
Moreover, since the volume measure converges as well, the space $(\M /{ \sim}, d_T)$ becomes a metric measure space with doubling property and this space is the limit of $(\M, g_t)$ in the measured Gromov-Hausdorff sense.

More generally, we obtain the following consequence of Theorem \ref{Thm:main}.

\begin{Corollary} \label{Cor:limits}
Let $(\M^i, (g^i_t)_{t \in [0,1]})$ be a sequence of Ricci flows on $n$-dimensional compact manifolds $\M^i$ with the property that $\nu[g^i_0, 1+A^{-1}] \geq - A$ and $|R| < A$ on $\M \times [0, 1]$ for some uniform $A < \infty$.
Let $x_i \in \M^i$ be points.
Then, after passing to a subsequence, we can find a pointed metric space $(\overline{\M}, \overline{d}, \overline{x})$, a continuous function
\[ d^\infty : \overline{\M} \times \overline{\M} \times [0,1] \to [0, \infty), \qquad (x,y,t) \mapsto d^\infty_t (x,y) \]
and a continuous family of measures $(\mu_t)_{t \in [0,1]}$ such that for any $x,y \in \overline{\M}$, the function $t \mapsto d^\infty_t (x,y)$ is $\frac12$-H\"older continuous and such that for any $t \in [0,1]$, the metric identification $(\overline{\M} / {\sim_t}, d^\infty_t, \mu_t, \overline{x})$ is a metric measure space with doubling property for balls of radius less than $\sqrt{t}$.
Here $x \sim_t y$ if and only if $d^\infty_t (x,y) = 0$.
Moreover, for any $t \in [0,1]$ the sequence $(\M^i, g^i_t, dg^i_t, x_i)$ converges to $(\overline{\M} / {\sim_t}, d^\infty_t, \mu_t, \overline{x})$ in the pointed, measured Gromov-Hausdorff sense.
\end{Corollary}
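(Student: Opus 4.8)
To prove Corollary \ref{Cor:limits}, the plan is to extract measured Gromov-Hausdorff limits at a fixed time by the usual compactness arguments and then to \emph{glue} these limits, taken over all $t\in[0,1]$, onto a single underlying space using the uniform distance distortion of Theorem \ref{Thm:main}. First I would fix $t\in[0,1]$ and combine the non-collapsing hypothesis $\nu[g^i_0,1+A^{-1}]\ge -A$ with the heat kernel and volume estimates of \cite{Bamler-Zhang} (in particular \cite[Theorem 1.4]{Bamler-Zhang}) to obtain uniform local volume bounds $cr^n\le \vol_{g^i_t}B_{g^i_t}(x,r)\le Cr^n$ for $r\le\sqrt t$, together with a uniform bound on the volume of $g^i_t$-balls of any fixed radius. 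These make $(\M^i,d_{g^i_t},dg^i_t)$ uniformly doubling on balls of radius $<\sqrt t$ and uniformly totally bounded, so Gromov's precompactness theorem in its pointed, measured version applies. Passing to a diagonal subsequence over a countable dense set $S\subseteq[0,1]$ with $0,1\in S$, I may assume that $(\M^i,g^i_t,dg^i_t,x_i)$ converges in the pointed, measured Gromov-Hausdorff sense for every $t\in S$.

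The second ingredient is equicontinuity in $t$, and this is where Theorem \ref{Thm:main} enters. If $d_{g^i_0}(x,y)\le R$, then the upper bound of Theorem \ref{Thm:main} with $t_1=0$ gives $d_{g^i_t}(x,y)\le e^{CA^{1/2}}R+C=:R'$ for all $t$, hence $\min\{d_{g^i_s}(x,y),d_{g^i_t}(x,y)\}\le R'$, and so by the ``in particular'' clause $|d_{g^i_s}(x,y)-d_{g^i_t}(x,y)|\le C'(A,R',n)\sqrt{|s-t|}$ for all $s,t\in[0,1]$. Thus on each $g^i_0$-ball of radius $R$ the family $\{d_{g^i_t}\}_{t\in[0,1]}$ is equi-$\frac12$-H\"older in $t$ with modulus independent of $i$; in particular, by the two-sided comparison in Theorem \ref{Thm:main}, a sequence $(q_i)$, $q_i\in\M^i$, has $\sup_i d_{g^i_0}(q_i,x_i)<\infty$ if and only if $\sup_i d_{g^i_t}(q_i,x_i)<\infty$ for one, hence every, $t\in[0,1]$. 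The measures behave similarly: from $\partial_t\,dg^i_t=-R_{g^i_t}\,dg^i_t$ and $|R|<A$ one gets $e^{-A|s-t|}\,dg^i_s\le dg^i_t\le e^{A|s-t|}\,dg^i_s$, so $\{dg^i_t\}_{t\in[0,1]}$ is equicontinuous in $t$ in the weak topology on balls of bounded radius.

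For the gluing, fix a non-principal ultrafilter $\omega$ on $\IN$ and let $\overline{\M}$ be the set of equivalence classes of sequences $(q_i)$, $q_i\in\M^i$, with $\lim_\omega d_{g^i_0}(q_i,x_i)<\infty$, where $(q_i)\sim(q'_i)$ iff $\lim_\omega d_{g^i_t}(q_i,q'_i)=0$ for every $t\in[0,1]$ (equivalently, by the second paragraph, for every $t\in S$); set $\overline x:=[(x_i)]$. For each $t$ define $d^\infty_t([(q_i)],[(r_i)]):=\lim_\omega d_{g^i_t}(q_i,r_i)$ and $\overline d:=\sup_{t\in[0,1]}d^\infty_t$; by the second paragraph this supremum is finite and $\overline d$ is a metric on $\overline{\M}$. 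Each $d^\infty_t\le\overline d$ is a pseudometric on $\overline{\M}$, the map $t\mapsto d^\infty_t(x,y)$ is $\frac12$-H\"older, and since $d^\infty_t\le\overline d$ with locally uniform H\"older modulus in $t$, the function $d^\infty$ is jointly continuous on $\overline{\M}\times\overline{\M}\times[0,1]$. Writing $p\sim_t q$ when $d^\infty_t(p,q)=0$, the map $(q_i)\mapsto q_i$ identifies $(\overline{\M}/{\sim_t},d^\infty_t,\overline x)$ with the pointed $\omega$-ultralimit of $(\M^i,d_{g^i_t},x_i)$, which agrees with its pointed Gromov-Hausdorff limit; this limit exists for every $t$ --- for $t\in S$ by the first paragraph, and for $t\notin S$ because $d_{g^i_t}$ and $d_{g^i_s}$ with $s\in S$ near $t$ are uniformly $C'\sqrt{|s-t|}$-close on the relevant balls. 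Pushing $dg^i_t$ forward along the same identification gives a measure $\mu_t$ on $\overline{\M}/{\sim_t}$, continuous in $t$ by the second paragraph, and the uniform doubling of $g^i_t$ on balls of radius $<\sqrt t$ passes to the limit; hence $(\overline{\M}/{\sim_t},d^\infty_t,\mu_t,\overline x)$ is a metric measure space with the doubling property on balls of radius $<\sqrt t$ and is the pointed, measured Gromov-Hausdorff limit of $(\M^i,g^i_t,dg^i_t,x_i)$.

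I expect the substantive step to be the third one, not the compactness in the first: the Gromov-Hausdorff limits at different times a priori live on completely unrelated spaces, and it is exactly the uniform $\frac12$-H\"older distortion of Theorem \ref{Thm:main} --- which bounds how a fixed ``bundle of basepoints'' $(q_i)$ moves as $t$ varies, uniformly in $i$ --- that lets one carry all the metrics $d^\infty_t$ coherently on the single space $\overline{\M}$ and promote Gromov-Hausdorff convergence from the countable dense set $S$ to all of $[0,1]$. The only genuinely external input is the uniform volume bound feeding Gromov precompactness in the first paragraph, for which I would rely on \cite{Bamler-Zhang}.
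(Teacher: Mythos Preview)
Your approach is correct and genuinely different from the paper's. The paper builds a single \emph{reference metric} $\overline{d}^i(x,y):=\int_0^1 d^i_t(x,y)\,dt$ on each $\M^i$, proves the bound $\overline{d}^i\ge c\big(\min\{d^i_t,1\}\big)^3$ from the H\"older estimate, and then --- this is the main technical step --- shows that $(\M^i,\overline{d}^i)$ is uniformly totally bounded by a clever $L$-fold Cartesian product argument: if $\overline{d}^i(y_{j_1},y_{j_2})\ge 2a$ then $d^i_{l/L}(y_{j_1},y_{j_2})>a$ for some $l\in\{1,\ldots,L\}$, so the diagonal embedding $y\mapsto(y,\ldots,y)$ into $(\M^i)^L$ with the product metric $g^i_{1/L}\oplus\cdots\oplus g^i_{(L-1)/L}$ separates the $y_j$, and the volume bounds on the factors control their number. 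After that, Gromov compactness gives a limit $(\overline{\M},\overline{d})$, and a single Arzel\`a--Ascoli application (using the equicontinuity in $(x,y,t)$ with respect to $\overline{d}^i$) produces all the $d^\infty_t$ at once.

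You instead work time-slice by time-slice: Gromov precompactness at each fixed $t>0$ comes directly from the two-sided volume bounds, a diagonal subsequence handles a dense set $S$, and then you package everything via an $\omega$-ultralimit with the finer equivalence relation $\sim$ and the metric $\overline{d}=\sup_t d^\infty_t$. This bypasses the product argument entirely, at the cost of invoking ultralimit machinery and having to argue separately that the ultralimit at each $t$ coincides with a genuine Gromov--Hausdorff limit along the chosen subsequence (your extension from $S$ to $[0,1]$ by uniform $\sqrt{|s-t|}$-closeness). One small point: you put $0\in S$, but the lower volume bound $cr^n$ is only available for $r<\sqrt{t}$, so Gromov precompactness at $t=0$ is not immediate from your first paragraph; it follows instead from the same approximation-by-positive-$t$ argument you use for $t\notin S$, so you may as well take $S\subset(0,1]$ dense. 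Both routes lean on exactly the same two inputs --- Theorem~\ref{Thm:main} for equicontinuity in $t$, and the volume bounds of \cite{Bamler-Zhang} for compactness --- but the paper's integrated metric makes the joint continuity of $d^\infty$ and the single-space realization more transparent, while your ultralimit construction is more modular and avoids the combinatorial covering step.
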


For the proof of Corollary \ref{Cor:limits} see section \ref{sec:Cor2}.

Note that if we impose the extra assumption that $|R| < R_i$ on $\M \times [0,1]$ for some sequence $R_i$ with $\lim_{i \to \infty} R_i = 0$, then the limiting family of measures $(\mu_t)_{t \in [0,1]}$ is constant in time.
Unfortunately, however, our results do not imply that $(d^\infty_t)_{t \in [0,1]}$ is constant in time as well.

Finally, we mention a direct consequence of Theorem \ref{Thm:main}, which can be interpreted as an analogue of the main result of \cite{Colding-Naber} in the parabolic case.

\begin{Corollary}
For any $0 < A < \infty$ and $n \in \IN$ there is a constant $C = C (A, n) < \infty$ such that the following holds:

Let $(\M^n, (g_t)_{t \in [0,1]})$ be a Ricci flow on an $n$-dimensional compact manifold $\M$ with the property that $\nu [g_0, 1+ A^{-1}] \geq - A$.
Assume that the scalar curvature satisfies $|R| \leq A$ on $\M \times [0,1]$.

Then for any $r> 0$ and $0 \leq t_1 \leq t_2 \leq 1$ and $x \in \M$ we have the following bound for Gromov-Hausdorff distance of $r$-balls
\[ d_{\textnormal{GH}} (B(x, t_1, r), B(x,t_2, r)) \leq C \sqrt{|t_1 - t_2|}. \]
\end{Corollary}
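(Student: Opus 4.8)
The plan is to deduce the statement from Theorem \ref{Thm:main} by exhibiting, between the two balls, a correspondence whose distortion is $O\big(\sqrt{|t_1-t_2|}\big)$; recall that $d_{\textnormal{GH}}$ of two nonempty metric spaces is at most one half of the distortion $\operatorname{dis}(\mathcal R)$ of any correspondence $\mathcal R$, and that the ball $B(x,t,r)$, with the restriction of $d_t$, has diameter at most $2r$. So fix $x$, times $0 \le t_1 \le t_2 \le 1$ and $r > 0$, put $\tau := t_2 - t_1$ and $B_i := (B(x,t_i,r),d_{t_i})$, let $D := 2r$, and let $C' = C'(A,D,n)$ be the constant of the ``in particular'' clause of Theorem \ref{Thm:main} for this $D$, so that $|d_{t_1}(y,z)-d_{t_2}(y,z)| \le \delta := C'\sqrt\tau$ whenever $\min\{d_{t_1}(y,z),d_{t_2}(y,z)\} \le 2r$. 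I would first dispose of the range $\tau \ge \tau_0$, where $\tau_0 = \tau_0(A,n,r) > 0$ is chosen so small that $\delta < r/2$ for $\tau \le \tau_0$: there $d_{\textnormal{GH}}(B_1,B_2) \le \tfrac12(\diam B_1 + \diam B_2) \le 2r \le (2r/\sqrt{\tau_0})\sqrt\tau$, which already has the desired form, so it remains to treat $\tau \le \tau_0$ (whence $\delta < r/2$).

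For the main case the plan is to use an enlargement of the diagonal of $\M \times \M$ as the correspondence. The one genuine subtlety is that $B(x,t_1,r)$ and $B(x,t_2,r)$ are in general \emph{different} subsets of $\M$, so the diagonal is total on neither side; I would repair this by projecting along $g_{t_2}$- and $g_{t_1}$-geodesics respectively. For $y\in B(x,t_1,r)$ one has $d_{t_1}(x,y)<r$, hence $d_{t_2}(x,y)<r+\delta$ by the above; let $\phi(y)$ be the point on a minimizing $g_{t_2}$-geodesic from $x$ to $y$ with $d_{t_2}(x,\phi(y))=\min\{d_{t_2}(x,y),\,r-\delta\}$, so $\phi(y)\in B(x,t_2,r)$ and $d_{t_2}(y,\phi(y))<2\delta$. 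Symmetrically one gets $\psi(w)\in B(x,t_1,r)$ with $d_{t_1}(w,\psi(w))<2\delta$ for $w\in B(x,t_2,r)$. Then
\[ \mathcal R := \{(y,\phi(y)):y\in B(x,t_1,r)\}\cup\{(\psi(w),w):w\in B(x,t_2,r)\} \]
is a correspondence between $B(x,t_1,r)$ and $B(x,t_2,r)$.

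To estimate $\operatorname{dis}(\mathcal R)$, observe that for each pair $(p,p^*)\in\mathcal R$ at least one of $d_{t_1}(p,p^*),d_{t_2}(p,p^*)$ is $<2\delta\le 2r$, so Theorem \ref{Thm:main} upgrades this to $d_{t_1}(p,p^*)<3\delta$ \emph{and} $d_{t_2}(p,p^*)<3\delta$. For $(p,p^*),(q,q^*)\in\mathcal R$ one has $p,q\in B(x,t_1,r)$, hence $d_{t_1}(p,q)<2r$ and $|d_{t_1}(p,q)-d_{t_2}(p,q)|\le\delta$, so by the triangle inequality for $d_{t_2}$,
\[ \big|d_{t_1}(p,q)-d_{t_2}(p^*,q^*)\big|\le |d_{t_1}(p,q)-d_{t_2}(p,q)|+d_{t_2}(p,p^*)+d_{t_2}(q,q^*)<\delta+3\delta+3\delta. \]
Thus $\operatorname{dis}(\mathcal R)\le 7\delta$ and $d_{\textnormal{GH}}(B_1,B_2)\le\tfrac72 C'\sqrt\tau$; taking $C:=\max\{\tfrac72 C',\,2r/\sqrt{\tau_0}\}$ would conclude.

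The whole argument is soft --- all of the analytic content sits in Theorem \ref{Thm:main} --- so I do not anticipate a real obstacle; the only steps needing care are the construction of $\mathcal R$ by geodesic projection and the observation that membership in $\mathcal R$ forces \emph{both} the time-$t_1$ and time-$t_2$ distance of a point to its partner to be comparable to $\delta$ (a second appeal to Theorem \ref{Thm:main}). I note that the constant produced this way depends on $r$ through the distance scale $D=2r$ entering the ``in particular'' clause of Theorem \ref{Thm:main}; this is the same sort of scale-dependence that appears in that clause, and in the Colding--Naber estimate of which the corollary is the parabolic analogue.
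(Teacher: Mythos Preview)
The paper gives no proof of this corollary --- it is simply announced as a direct consequence of Theorem~\ref{Thm:main} --- so there is nothing to compare against. Your argument (build a correspondence by retracting each ball into the other along minimizing geodesics, then bound the distortion using the ``in particular'' clause of Theorem~\ref{Thm:main}) is correct and is exactly the natural way to make ``direct consequence'' precise. The two small technical points you single out --- that the diagonal alone is not a correspondence because the two balls are different subsets of $\M$, and that one needs a second invocation of Theorem~\ref{Thm:main} to control \emph{both} $d_{t_1}(p,p^*)$ and $d_{t_2}(p,p^*)$ --- are handled correctly.

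The one substantive issue is the one you already flag: your constant depends on $r$, both through $C'=C'(A,2r,n)$ in the ``in particular'' clause and through $\tau_0=\tau_0(A,n,r)$, whereas the corollary as stated asserts $C=C(A,n)$ with no $r$-dependence. This is not a defect in your argument but rather a genuine feature of what Theorem~\ref{Thm:main} delivers: the exponential factor $\exp\big(CR_0^{1/2}\sqrt{t_2-t_1}\big)$ in the upper bound (which the paper itself says is necessary, citing the hyperbolic example) means that for points at distance of order $r$ the distortion $|d_{t_1}-d_{t_2}|$ picks up a factor of $r$, and nothing in Theorem~\ref{Thm:main} removes it. So your proof establishes the corollary with $C=C(A,n,r)$; the stated $C=C(A,n)$ appears to be a minor imprecision in the paper rather than something your argument misses.
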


For the rest of the paper, we will fix the dimension $n \geq 2$ of the manifold $\M$.
Most of our constants will depend on $n$.
For convenience we will not mention this dependence anymore.

\section{Upper volume bound}
We first generalize the upper volume bound from \cite{Zhang-12} or \cite{Chen-Wang-13}.

\begin{Lemma} \label{Lem:volBr}
For any $A < \infty$ there is a uniform constant $C_0 = C_0 (A) < \infty$ such that the following holds:

Let $(\M^n, (g_t)_{t \in [-1,1]} )$ be a Ricci flow on a compact, $n$-dimensional manifold $\M$ with $|R| \leq 1$ on $\M \times [-1,1]$.
Assume that $\nu[g_{-1}, 4] \geq -A$.
Then for any $(x, t) \in \M \times [0,1]$ and $r > 0$ we have
\[ |B(x, t, r)|_{t} < C_0 r^n e^{C_0 r}. \]
Here $|S|_t$ denotes the volume of a set $S \subset \M$ with respect to the metric $g_t$.
\end{Lemma}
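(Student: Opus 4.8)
The plan is to adapt the heat-kernel argument from \cite{Zhang-12} and \cite{Chen-Wang-13}, feeding in the Gaussian bounds of \cite[Theorem 1.4]{Bamler-Zhang} in place of the weaker estimates available there. As a preliminary reduction, Perelman's monotonicity of the $\nu$-functional along the flow shows that the hypothesis $\nu[g_{-1},4]\ge -A$ controls $\nu[g_t,\tau]$ for all $t\in[-1,1]$ and all relevant $\tau$, so that the flow (restricted, if necessary, to a slightly smaller subinterval, and after possibly enlarging $A$, which only weakens the hypothesis) falls within the scope of \cite[Theorems 1.1 and 1.4]{Bamler-Zhang}. In particular we may freely use the heat-kernel bounds of Part I and the fact that $(\M,g_t)$ satisfies a uniform Sobolev inequality with constants depending only on $A$.

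\emph{The bound for $r\le 1$.} Fix $(x,t)\in\M\times[0,1]$ and $r\le 1$, and set $s:=t-r^2\ge -1$. Let $G(\cdot\,,\cdot\,;x,s)$ be the heat kernel with pole at $(x,s)$, i.e. $\p_\tau G=\De_y G$ with $G(\cdot,\tau;x,s)\to\de_x$ as $\tau\searrow s$. Since $\p_\tau\,dg_\tau=-R\,dg_\tau$, $G\ge 0$ and $|R|\le 1$, the total mass obeys $\frac{d}{d\tau}\int_\M G(y,\tau;x,s)\,dg_\tau(y)=-\int_\M RG\,dg_\tau\le\int_\M G\,dg_\tau$, and hence
\[
\int_\M G(y,t;x,s)\,dg_t(y)\;\le\; e^{\,t-s}\;=\;e^{\,r^2}\;\le\; e .
\]
On the other hand \cite[Theorem 1.4]{Bamler-Zhang} provides a lower Gaussian bound $G(y,t;x,s)\ge c_0\,(t-s)^{-n/2}\exp\!\big(-d_t(x,y)^2/(c_0(t-s))\big)$, so $G(y,t;x,s)\ge c_0e^{-1/c_0}r^{-n}$ for every $y\in B(x,t,r)$. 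Combining the two displays gives $e\ge c_0e^{-1/c_0}r^{-n}|B(x,t,r)|_t$, i.e.
\[
|B(x,t,r)|_t\le C_1\,r^n ,\qquad r\le 1 ,
\]
with $C_1=C_1(A)$. (Should the Gaussian in \cite[Theorem 1.4]{Bamler-Zhang} be phrased using $d_s$ rather than $d_t$, one first invokes the a priori distance distortion bound \cite[Theorem 1.1]{Bamler-Zhang} on the short interval $[s,t]$ to pass between the two.)

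\emph{The bound for $r>1$.} This follows from the $r\le1$ case by the covering argument of \cite{Zhang-12}/\cite{Chen-Wang-13}. First, testing the Sobolev inequality against a cutoff of $B(z,t,\rho)$ yields the matching lower volume bound $|B(z,t,\rho)|_t\ge c_1\rho^n$ for all $z\in\M$ and $\rho\le 1$. Now cover $B(x,t,r)$ by the unit balls $B(z_i,t,1)$ centered at a maximal $1$-separated subset $\{z_i\}$ of $B(x,t,r)$; then $B(x,t,r)\subset\bigcup_i B(z_i,t,1)$, so $|B(x,t,r)|_t\le N C_1$ with $N=\#\{z_i\}$, while the disjoint half-balls $B(z_i,t,\tfrac12)\subset B(x,t,r+\tfrac12)$ give $N\,c_1 2^{-n}\le |B(x,t,r+\tfrac12)|_t$. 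Iterating a comparison of this kind between concentric balls whose radii differ by a fixed amount — exactly as in \cite{Zhang-12}/\cite{Chen-Wang-13} — produces $|B(x,t,r)|_t\le C_0 r^n e^{C_0 r}$, which together with the $r\le1$ case proves the lemma.

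The genuinely analytic content is imported from Part I; the points requiring care are (i) the interface between the heat-kernel bounds of \cite[Theorem 1.4]{Bamler-Zhang}, which are phrased relative to one fixed time slice, and the volumes of the metric balls $B(x,t,r)$, which forces one to track which metric is being used and to call on the crude distortion estimate \cite[Theorem 1.1]{Bamler-Zhang} whenever $d_s$ and $d_t$ must be compared, and (ii) carrying out the large-$r$ covering step in the present generality. The latter is essentially combinatorial and is what produces the (far from sharp) exponential factor $e^{C_0 r}$, so that no smallness of $r$ is needed there.
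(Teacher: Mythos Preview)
Your treatment of the case $r\le 1$ is correct and in fact reconstructs, via the heat-kernel mass bound and the Gaussian lower bound of \cite[Theorem~1.4]{Bamler-Zhang}, the unit-scale volume estimate that the paper simply quotes from \cite{Zhang-12}, \cite{Chen-Wang-13}.

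The gap is in the large-$r$ step. The comparison you write down points the wrong way: from $|B(x,t,r)|_t\le N C_1$ and $N\,c_1 2^{-n}\le |B(x,t,r+\tfrac12)|_t$ one can only conclude $|B(x,t,r)|_t\le C\,|B(x,t,r+\tfrac12)|_t$, which is vacuous, and iterating it yields nothing. The problem is that your covering number $N$ depends on $r$, and your only control on $N$ is through the volume of a ball \emph{larger} than the one you are trying to bound. (Note also that the paper describes this lemma as a \emph{generalization} of the bound in \cite{Zhang-12}, \cite{Chen-Wang-13}, so appealing to those references ``exactly as in'' for the large-$r$ iteration does not close the loop.)

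What is needed is an $r$-independent multiplicative factor per unit increase in radius. The paper obtains this by choosing a maximal $\tfrac18$-net $x_1,\dots,x_N$ in the \emph{fixed} ball $B(x,t,\tfrac12)$; the unit-scale two-sided volume bounds give $N\le C_*=C_*(A)$ once and for all. The key geometric input is then a geodesic argument: for any $y\in B(x,t,r+\tfrac14)$ with $r\ge\tfrac12$, a minimizing geodesic from $x$ to $y$ hits, at arclength $\tfrac12$, a point within $\tfrac14$ of some $x_i$, so $d_t(x_i,y)<r$. Hence $B(x,t,r+\tfrac14)\subset\bigcup_{i=1}^N B(x_i,t,r)$, and induction on $k$ gives $|B(x,t,\tfrac14 k)|_t\le C_*^{\,k}$, which is the desired exponential bound. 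Your sketch is missing exactly this geodesic covering step (or some equivalent device producing a uniform doubling-type inequality $|B(x,t,r+\tfrac14)|_t\le C_*\sup_z|B(z,t,r)|_t$); without it the iteration cannot be set up.
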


\begin{proof}
It follows from \cite{PerelmanI}, \cite{Zhang-12}, \cite{Chen-Wang-13} (see also \cite[sec 2]{Bamler-Zhang}), that for any $x \in \M$ and $0 \leq r \leq 1$, we have
\begin{equation} \label{eq:upperandlowervolbound}
 c r^n \leq |B(x, t_0, r)|_{t_0} \leq C r^n,
\end{equation}
for some constants $c, C$, which only depend on $A$.

Fix some $x \in \M$ and let $N < \infty$ be maximal with the property that we can find points $x_1, \ldots, x_N \in B(x, t, \frac12)$ such that the balls $B(x_1, t, \tfrac18 ), \ldots, B(x_N, t, \tfrac18 )$ are pairwise disjoint.
Note that then
\[ B(x_1, t, \tfrac18 ), \ldots, B(x_N, t, \tfrac18 ) \subset B(x, t, 1). \]
So, by (\ref{eq:upperandlowervolbound}), we have $N \leq C_* := (c (\frac18)^n)^{-1} C$.
Moreover, by the maximality of $N$, we have
\begin{equation} \label{eq:ballscover12}
 B(x_1, t, \tfrac14) \cup \ldots \cup B(x_N, t, \tfrac14 ) \supset B(x, t, \tfrac12 ).
\end{equation}

We now argue that for all $r \geq \tfrac12$
\begin{equation} \label{eq:Nballscoverr14}
 B (x_1, t, r) \cup \ldots \cup B(x_N, t, r) \supset B(x, t, r + \tfrac14 ).
\end{equation}
Let $y \in B(x, t, r+ \frac14)$ and consider a time-$t$ minimizing geodesic $\gamma : [0,l] \to \M$ between $x$ and $y$ that is parameterized by arclength.
Then $l < r + \frac14$.
By (\ref{eq:ballscover12}) we may pick $i \in \{ 1, \ldots, N \}$ such that $\gamma (\tfrac12) \in \overline{B (x_i, t, \tfrac14 )}$.
Then
\[ \dist_{t} (x_i, y) \leq (l- \tfrac12) + \dist_{t} (\gamma (\tfrac12), x_i) \leq l - \tfrac14 < r. \]
So $y \in B(x_i, t_0, r)$, which confirms (\ref{eq:Nballscoverr14}).

Let us now prove by induction on $k = 1, 2, \ldots$ that for any $x \in \M$
\begin{equation} \label{eq:BlessCtok}
 |B(x, t, \tfrac14 k ) |_{t} < C_*^{k}.
\end{equation}
For $k = 1$, the inequality follows from (\ref{eq:upperandlowervolbound}) (assuming $c < 1$ and hence $C_* > C$).
If the inequality is true for $k$, then we can use (\ref{eq:Nballscoverr14}) to conclude
\[
 |B(x, t, \tfrac14 (k+1)) |_{t} \leq | B(x_1, t, \tfrac14 k ) |_{t} + \ldots + | B(x_N, t, \tfrac14 k) |_{t} 
 \leq N \cdot C_*^k \leq C_* \cdot C_*^k = C_*^{k+1}.
\]
So (\ref{eq:BlessCtok}) also holds for $k+1$.
This finishes the proof of (\ref{eq:BlessCtok}).

The assertion of the lemma now follows from (\ref{eq:upperandlowervolbound}) for $r < 1$.
For $r \geq 1$ choose $k \in \IN$ such that $\frac14 (k-1) \leq r < \frac14 k$.
Then, by (\ref{eq:BlessCtok}), we have
\[ |B(x,t,r)|_t < |B(x, t, \tfrac14 k )|_t < C_*^k = C_* e^{( \log C_*) (k-1) } \leq C_*  e^{4(\log C_*) r}. \]
This finishes the proof.
\end{proof}

\section{Generalized maximum principle}
Consider a Ricci flow $(g_t)_{t \in I}$ on a closed manifold $\M$.
In the following we will consider the heat kernel $K(x,t; y,s)$ on a Ricci flow background.
That is, for any $(y,s) \in \M \times I$ the kernel $K(\cdot,\cdot; y,s)$ is defined for $t > s$ and $x \in \M$ and satisfies
\[ (\partial_t - \Delta_x ) K(x,t; y,s) = 0 \qquad \text{and} \qquad \lim_{t \searrow s} K(\cdot, t ; y,s) = \delta_y. \]
Then, for fixed $(x,t) \in \M \times I$, the function $K(x,t; \cdot, \cdot)$, which is defined for $s < t$, is a kernel for the conjugate heat equation
\[ (-\partial_s - \Delta_y + R(y,s)) K(x,t; y,s) = 0 \qquad \text{and} \qquad \lim_{s \nearrow t} K(x,t; \cdot, s) = \delta_x. \]
Recall that for any $s < t$ and $x \in \M$ we have
\begin{equation} \label{eq:Kintis1}
\int_\M K(x,t; y, s) dg_s (y) = 1.
\end{equation}

\begin{Lemma} \label{Lem:KRicless1}
Let $(\M, (g_t)_{t \in [0, 1]})$ be a Ricci flow on a compact manifold $\M$ with $|R| \leq R_0$ on $\M \times [0, 1]$ for some constant $R_0 \geq 0$.
Then for any $(x,t) \in \M \times (0,1]$ we have
\[ \int_0^t \int_\M K(x,t; y,s) |{\Ric}|^2 (y,s) dg_s (y) ds \leq R_0. \]
\end{Lemma}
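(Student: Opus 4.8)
The key identity is the evolution of the scalar curvature under Ricci flow: $\partial_s R = \Delta R + 2|{\Ric}|^2$. The plan is to test this against the heat kernel $K(x,t;\cdot,\cdot)$ and integrate by parts in space and time, using that $K(x,t;\cdot,\cdot)$ solves the conjugate heat equation. Concretely, fix $(x,t)$ and for $0 < s < t$ consider the quantity $F(s) := \int_\M K(x,t;y,s) R(y,s)\, dg_s(y)$. I want to differentiate $F$ in $s$. Since $\partial_s (dg_s) = -R\, dg_s$ along the flow, and using $(-\partial_s - \Delta_y + R)K = 0$, we compute
\[
\frac{d}{ds} F(s) = \int_\M \big( (\partial_s K) R + K (\partial_s R) - K R^2 \big)\, dg_s
= \int_\M \big( -\Delta_y K + RK \big) R + K(\Delta R + 2|{\Ric}|^2) - KR^2 \, dg_s.
\]
The $RK\cdot R$ term cancels the $-KR^2$ term. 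Integrating by parts, $\int_\M (-\Delta_y K) R = \int_\M (-K)(\Delta R)$, which cancels $\int_\M K \Delta R$. Hence
\[
\frac{d}{ds} F(s) = 2 \int_\M K(x,t;y,s) |{\Ric}|^2(y,s)\, dg_s(y).
\]

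Now integrate this identity from $s$ near $0$ up to $s$ near $t$. The left side telescopes to $\lim_{s \nearrow t} F(s) - \lim_{s \searrow 0} F(s)$. Since $K(x,t;\cdot,s) \to \delta_x$ as $s \nearrow t$, the first limit is $R(x,t)$; by continuity of the flow on the compact interval $[0,1]$ and (\ref{eq:Kintis1}) together with the bound $|R| \le R_0$, the second limit equals $\int_\M K(x,t;y,0) R(y,0)\, dg_0(y) \ge -R_0$. Therefore
\[
2 \int_0^t \int_\M K(x,t;y,s) |{\Ric}|^2(y,s)\, dg_s(y)\, ds = R(x,t) - \int_\M K(x,t;y,0) R(y,0)\, dg_0(y) \le R_0 - (-R_0) = 2R_0,
\]
which gives the claimed bound after dividing by $2$.

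The main technical point to be careful about is the justification of differentiation under the integral sign and the integration by parts — i.e., that $F$ is indeed $C^1$ on $(0,t)$ and that there are no boundary contributions in space (there are none since $\M$ is closed). On a compact manifold with a smooth flow on a closed time interval this is standard: $K(x,t;\cdot,\cdot)$ and all its spatial derivatives are smooth and bounded for $s$ in compact subsets of $(s_0, t)$ for any $s_0$, and near $s = t$ one can either use the known Gaussian-type bounds on $K$ (e.g.\ from \cite{Bamler-Zhang}) or simply note that the integrated identity extends by continuity since both sides are continuous in the lower endpoint as it approaches $0$. The only genuinely quantitative input is $|R| \le R_0$, used solely to control $F$ at the endpoints $s=0$ and (via the delta function) $s=t$; the cancellations in the $s$-derivative are purely algebraic and exact.
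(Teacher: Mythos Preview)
Your proof is correct and is essentially the same as the paper's: the paper simply states the Duhamel-type identity
\[
R(x,t) = \int_\M K(x,t;y,0)\,R(y,0)\,dg_0(y) + 2\int_0^t \int_\M K(x,t;y,s)\,|{\Ric}|^2(y,s)\,dg_s(y)\,ds
\]
and then uses $R(x,t)\le R_0$, $R(\cdot,0)\ge -R_0$ and (\ref{eq:Kintis1}), whereas you additionally supply the standard derivation of this identity by differentiating $F(s)$ and using the conjugate heat equation.
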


\begin{proof}
This follows from the identities
\[ R(x,t) =  \int_\M K(x,t; y,0) R (y,0) dg_0 (y) + 2 \int_0^t \int_\M K(x,t; y,s) |{\Ric}|^2 (y,s) dg_s (y) ds \]
and (\ref{eq:Kintis1}) as well as $R(x,t) \leq R_0$ and $R(\cdot, 0) \geq - R_0$ on $\M$.
\end{proof}

We will now use the Gaussian bounds from \cite{Bamler-Zhang} to bound the forward heat kernel in terms of the backwards conjugate heat kernel based at a certain point and time.
Note that in the following Lemma we only obtain estimates on the time-interval $[0,1]$, but we need to assume that the flow exists on $[-1,1]$.
This is due to an extra condition in \cite[Theorem 1.4]{Bamler-Zhang}.

\begin{Lemma} \label{Lem:hkreversion}
For any $A < \infty$ there are uniform constants $C_1 = C_1 (A), Y = Y (A) < \infty$ such that the following holds:

Let $(\M^n, (g_t)_{t \in [-1,1]})$ be a Ricci flow on a compact, $n$-dimensional manifold $\M$ with the property that $\nu [g_{-1}, 4 ] \geq - A$.
Assume that $|R| \leq 1$ on $\M \times [-1,1]$.
Let $0 \leq t_1 < t_2 < t_3 \leq 1$ such that
\[ Y (t_2 - t_1) \leq  t_3 - t_2 \leq 10 Y (t_2 - t_1). \]
Then for all $x, y \in \M$
\[ K(x,t_2; y, t_1) < C_1 K(y, t_3; x, t_2). \]
\end{Lemma}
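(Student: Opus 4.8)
The plan is to compare the two heat kernels by using the Gaussian upper and lower bounds from \cite[Theorem 1.4]{Bamler-Zhang}, which under the present hypotheses give, for suitable constants depending on $A$,
\[ K(x,t_2;y,t_1) \leq \frac{C}{(t_2-t_1)^{n/2}} \exp\Bigl( -\frac{d_{t_1}^2(x,y)}{C(t_2-t_1)} \Bigr), \qquad
K(y,t_3;x,t_2) \geq \frac{c}{(t_3-t_2)^{n/2}} \exp\Bigl( -\frac{C\, d_{t_2}^2(x,y)}{t_3-t_2} \Bigr). \]
(The lower bound requires that the flow exist on a slightly larger time-interval, which is why we assume existence on $[-1,1]$; this is exactly the extra condition alluded to before the statement.) Since $t_3-t_2$ and $t_2-t_1$ are comparable by the hypothesis $Y(t_2-t_1) \leq t_3-t_2 \leq 10Y(t_2-t_1)$, the two polynomial prefactors $(t_2-t_1)^{-n/2}$ and $(t_3-t_2)^{-n/2}$ differ only by a factor depending on $Y$ and $n$, hence ultimately on $A$. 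So the whole problem reduces to controlling the ratio of the two exponential factors.

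First I would reduce the exponential comparison to a statement about distances: it suffices to show that
\[ \frac{d_{t_1}^2(x,y)}{t_2-t_1} \geq \frac{C'\, d_{t_2}^2(x,y)}{t_3-t_2} - C'' , \]
i.e.\ that up to a bounded additive error the Gaussian weight at scale $(t_2-t_1)$ in the first kernel dominates the Gaussian weight at scale $(t_3-t_2)$ in the second. Using $t_3-t_2 \geq Y(t_2-t_1)$, it is enough to show $d_{t_1}^2(x,y) \geq c\, d_{t_2}^2(x,y) - C(t_2-t_1)$ for a constant $c>0$ depending only on $A$ (after which one absorbs the $C(t_2-t_1)/(t_2-t_1)=C$ into the additive error and uses that $Y$ is large). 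This is precisely the kind of distance-distortion estimate supplied by \cite[Theorem 1.1]{Bamler-Zhang}: on a unit-length time-interval with bounded scalar curvature and the $\nu$-bound, one has $d_{t_2}(x,y) \leq \lambda^{-1} d_{t_1}(x,y) + \Lambda$ for constants $\lambda,\Lambda$ depending on $A$, which rearranges to exactly the inequality needed (squaring and using $(a+b)^2 \leq 2a^2+2b^2$).

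The main obstacle is that this naive two-sided-Gaussian argument loses control precisely in the regime where $d_{t_1}(x,y)$ is small compared to $\sqrt{t_2-t_1}$ but $d_{t_2}(x,y)$ is large — however, the distance-distortion bound above rules this out — and, more seriously, in making sure the \emph{lower} bound on $K(y,t_3;x,t_2)$ is genuinely available under the stated hypotheses; the Gaussian lower bound from \cite{Bamler-Zhang} is what forces the auxiliary time $t_3$ to sit at a scale comparable to $t_2-t_1$ and forces the flow to exist past time $1$ backward to $-1$. I would therefore state the needed upper and lower Gaussian bounds as the two ingredients, fix $Y=Y(A)$ large enough that the exponential of the first kernel dominates (this is where the freedom in the lower bound on $t_3-t_2$ is spent), and then set $C_1 = C_1(A)$ to collect the prefactor ratio together with the bounded additive exponential error $e^{C''}$. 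A secondary technical point to check is that all the constants in \cite[Theorems 1.1 and 1.4]{Bamler-Zhang} are applied with the correct normalization ($|R|\le 1$, $\nu[g_{-1},4]\ge -A$), which matches the hypotheses here after no rescaling is needed.
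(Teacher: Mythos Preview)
Your proposal is correct and follows the same overall strategy as the paper: bound $K(x,t_2;y,t_1)$ from above and $K(y,t_3;x,t_2)$ from below using the Gaussian estimates of \cite[Theorem~1.4]{Bamler-Zhang}, then choose $Y$ so that the exponential factors compare and absorb the prefactor ratio into $C_1$.

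The one genuine difference is in how the exponentials are matched. The paper sets $Y:=(C_2^*)^2$ and uses the lower Gaussian bound for $K(y,t_3;x,t_2)$ with the distance taken at time $t_1$ (rather than $t_2$), so that the same quantity $d_{t_1}(x,y)$ appears in both exponentials and they cancel exactly; no distance-distortion input is needed at all. You instead take the lower bound with $d_{t_2}$, and then invoke the coarse distortion estimate \cite[Theorem~1.1]{Bamler-Zhang} to compare $d_{t_2}$ with $d_{t_1}$, producing a bounded additive error in the exponent which is then absorbed into $C_1$. Your route is a bit longer but is more explicit about exactly which form of the heat-kernel bound is being used (since \eqref{eq:hkbound} as quoted only gives $d_s$ at the earlier time $s$); the paper's route is cleaner but tacitly relies on the flexibility in \cite[Theorem~1.4]{Bamler-Zhang} to evaluate the distance at a slightly earlier time. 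Either way the argument goes through with constants depending only on $A$.
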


\begin{proof}
Recall that, by \cite[Theorem 1.4]{Bamler-Zhang}, there are constants $C^*_1 = C^*_1 (A), C^*_2 = C^*_2 (A) < \infty$ such that for any $0 \leq s < t \leq 1$
\begin{equation} \label{eq:hkbound}
 \frac{1}{C^*_1 (t-s)^{n/2}} \exp \Big({ - \frac{C^*_2 d_s^2 (x,y)}{t-s} }\Big) < K (x,t; y,s) < \frac{C^*_1}{ (t-s)^{n/2}} \exp \Big({ - \frac{d_s^2 (x,y)}{C^*_2 (t-s)} }\Big).
\end{equation}
Set now
\[ Y := (C^*_2)^2 \qquad \text{and} \qquad C_1 := (C^*_1)^2 (10 Y)^{n/2} . \]
Then
\begin{alignat*}{1}
 K(x,t_2 ; y,t_1) &<  \frac{C^*_1}{ (t_2- t_1)^{n/2}} \exp \Big({ - \frac{ d_{t_1}^2 (x,y)}{C^*_2 (t_2-t_1)} }\Big) \\
 &\leq  \frac{C^*_1}{(10 Y)^{-n/2} (t_3-t_2)^{n/2}} \exp \Big({ - \frac{ d_{t_1}^2 (x,y)}{C^*_2 (t_2- t_1)} }\Big) \\
 &\leq  C_1 \frac{1}{C^*_1 (t_3-t_2)^{n/2}} \exp \Big({ - \frac{ d_{t_1}^2 (x,y)}{C^*_2 Y^{-1} (t_3-t_2)} }\Big) \\
 &=  C_1 \frac{1}{C^*_1 (t_3-t_2)^{n/2}} \exp \Big({ - \frac{ C^*_2 d_{t_1}^2 (x,y)}{ (t_3-t_2)} }\Big) < C_1 K(y, t_3, x, t_2).
\end{alignat*}
This finishes the proof.
\end{proof}

Next, we combine Lemmas \ref{Lem:KRicless1} and \ref{Lem:hkreversion} to obtain the following bound.

\begin{Lemma} \label{Lem:KRicreverseintegralbound}
For any $A < \infty$ there are uniform constants $C_2 = C_2 (A) < \infty$, $\theta_2 = \theta_2 (A) > 0$ such that the following holds:

Let $(\M^n, (g_t)_{t \in [-1,1]})$ be a Ricci flow on a compact, $n$-dimensional manifold $\M$ with the property that $\nu [g_{-1}, 4 ] \geq - A$.
Assume that $|R| \leq R_0$ on $\M \times [-1,1]$ for some constant $0 \leq R_0 \leq 1$.
Then for any $0 \leq t < 1$ and $0 < a \leq \theta_2 (1-t)$ and $x \in \M$ we have
\[ \int_{t+a}^{t+2a} \int_\M K(y,s ; x,t) |{\Ric}| (y,s) dg_s (y) ds < C_2 R_0^{1/2} \sqrt{a}. \]
\end{Lemma}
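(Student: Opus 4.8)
The plan is to bound the given space-time integral of $|\Ric|$ against the backward heat kernel $K(y,s;x,t)$ by first passing to the forward heat kernel via Lemma~\ref{Lem:hkreversion}, and then applying Lemma~\ref{Lem:KRicless1} together with the Cauchy--Schwarz inequality. The subtlety is that Lemma~\ref{Lem:hkreversion} compares $K(x,t_2;y,t_1)$ with $K(y,t_3;x,t_2)$ only when the time-separations $t_2-t_1$ and $t_3-t_2$ are comparable up to the fixed ratio~$Y=Y(A)$; so I will need to choose auxiliary times carefully and possibly subdivide the interval $[t+a,t+2a]$ into finitely many pieces on which such a comparison is available.

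Concretely, first I would reduce to the case $R_0=1$ (or general $R_0$ with $|\Ric|^2$-integral bounded by $R_0$) by using Lemma~\ref{Lem:KRicless1} at the appropriate later time. Pick a time $t_3 = t + a + \lambda a$ with $\lambda \in [Y, 10Y]$ chosen so that, for all $s \in [t+a, t+2a]$, the triple $(t, s, t_3)$ satisfies the hypothesis $Y(s-t) \le t_3 - s \le 10Y(s-t)$ of Lemma~\ref{Lem:hkreversion}. A short computation shows this forces a constraint on $\lambda$ versus the ratio of $s-t$ over $[t+a,t+2a]$, which ranges only over the factor-$2$ window $[a,2a]$; if a single $\lambda$ does not work for the whole window, split $[t+a,t+2a]$ into a bounded number $N=N(A)$ of subintervals on each of which one such $\lambda$ works, and handle each subinterval separately, absorbing $N$ into the final constant. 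One also needs $t_3 \le 1$, which is where the smallness hypothesis $a \le \theta_2(1-t)$ enters: taking $\theta_2 = \theta_2(A)$ small enough (so that $t + (2 + 10Y)\theta_2(1-t) \le 1$) guarantees $t_3 \in [t, 1]$, so that the Gaussian bounds of \cite{Bamler-Zhang} apply on the relevant subintervals of $[-1,1]$.

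With $t_3$ fixed, Lemma~\ref{Lem:hkreversion} gives $K(y,s;x,t) < C_1 K(x, t_3; y, s)$ for all $y \in \M$ and $s$ in the relevant subinterval. Hence
\[
\int_{t+a}^{t+2a} \int_\M K(y,s;x,t)\, |{\Ric}|(y,s)\, dg_s(y)\, ds
< C_1 \int_{t+a}^{t+2a} \int_\M K(x,t_3; y,s)\, |{\Ric}|(y,s)\, dg_s(y)\, ds.
\]
Now apply Cauchy--Schwarz in the measure $K(x,t_3;y,s)\, dg_s(y)\, ds$ on the slab $\M \times [t+a, t+2a]$: the right-hand side is at most
\[
C_1 \left( \int_{t+a}^{t+2a} \int_\M K(x,t_3;y,s)\, dg_s(y)\, ds \right)^{1/2}
\left( \int_{t+a}^{t+2a} \int_\M K(x,t_3;y,s)\, |{\Ric}|^2(y,s)\, dg_s(y)\, ds \right)^{1/2}.
\]
The first factor equals $\big(\int_{t+a}^{t+2a} 1\, ds\big)^{1/2} = \sqrt{a}$ by the conservation identity \eqref{eq:Kintis1} (applied with base point $(x,t_3)$, noting $s < t_3$). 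The second factor is bounded, via Lemma~\ref{Lem:KRicless1} applied to the flow based at $(x,t_3)$ on $[0, t_3]$ (after the harmless shift to make the initial time~$0$), by $\sqrt{R_0}$ — more precisely by $\sqrt{R_0}$ since the full integral $\int_0^{t_3}\int_\M K(x,t_3;y,s)|\Ric|^2 \le R_0$. Combining, one gets the bound $C_1 R_0^{1/2} \sqrt{a}$, and multiplying by the subdivision count $N=N(A)$ yields the claimed estimate with $C_2 = N C_1$.

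The main obstacle I anticipate is the bookkeeping in the previous paragraph: verifying that a uniform choice (or a bounded-cardinality family of choices) of the intermediate time $t_3$ simultaneously satisfies the comparability hypothesis of Lemma~\ref{Lem:hkreversion} for \emph{every} $s$ in the factor-$2$ window $[t+a, t+2a]$, while also keeping $t_3 \le 1$. Everything else — Cauchy--Schwarz, the normalization $\int K = 1$, and the $|\Ric|^2$ bound — is a direct application of the preceding lemmas. A secondary point to be careful about is that Lemma~\ref{Lem:KRicless1} is stated for flows on $[0,1]$ based at a final time in $(0,1]$; applying it with base point $(x, t_3)$ requires only a trivial time-translation and the observation that $t_3 \le 1$, so no new hypothesis is needed beyond what is already assumed.
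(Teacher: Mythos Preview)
Your proposal is correct and follows the same route as the paper: fix a single later time $t_3$, use Lemma~\ref{Lem:hkreversion} to replace $K(y,s;x,t)$ by $C_1 K(x,t_3;y,s)$ on the slab $\M\times[t+a,t+2a]$, then apply Cauchy--Schwarz together with \eqref{eq:Kintis1} and Lemma~\ref{Lem:KRicless1}. The subdivision you anticipate is in fact unnecessary: since one may take $Y=(C_2^*)^2\ge 1$, any choice with $t_3-t\in[(2Y+2)a,(10Y+1)a]$ (a nonempty interval) satisfies $Y(s-t)\le t_3-s\le 10Y(s-t)$ for every $s\in[t+a,t+2a]$, so a single $t_3$ suffices and $\theta_2$ need only ensure $t_3\le 1$.
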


\begin{proof}
Choose $\theta_2 := \frac{1}2 Y^{-1}$ and set
\[ t_3 := t + 2Y a  \leq 1. \]
So for any $s \in [t+a, t+2a]$ we have
\[ Y (s - t) \leq Y \cdot 2a = t_3 - t \leq 10 Y a \leq 10 Y (s-t). \]
So by Lemma \ref{Lem:hkreversion}, we have for any $(y,s) \in \M \times [t+a, t+2a]$
\[ K(y,s ; x,t) < C_1 K(x,t_3 ; y,s). \]
We can then conclude, using Cauchy-Schwarz, (\ref{eq:Kintis1}) and Lemma \ref{Lem:KRicless1}, that
\begin{alignat*}{1}
  \int_{t+a}^{t+2a} \int_\M K(y,s & ; x,t)  |{\Ric}| (y,s) dg_s (y)  ds \\
 &\leq C_1 \int_{t+a}^{t+2a} \int_\M K(x,t_3 ; y,s) |{\Ric}| (y,s) dg_s (y) ds \\
 &\leq C_1 \bigg(  \int_{t+a}^{t+2a} \int_\M K(x,t_3 ; y,s) dg_s (y) ds \bigg)^{1/2} \\
 &\qquad\qquad \cdot 
  \bigg(  \int_{t+a}^{t+2a} \int_\M K(x,t_3 ; y,s) |{\Ric}|^2 (y,s)  dg_s (y) ds \bigg)^{1/2} \\
 & = C_1 \sqrt{a} \bigg(  \int_{t+a}^{t+2a} \int_\M K(x,t_3 ; y,s) |{\Ric}|^2 (y,s) dg_s (y) ds \bigg)^{1/2}  \\
 &\leq C_1 R_0^{1/2} \sqrt{a}.
\end{alignat*}
This proves the desired result.
\end{proof}

\begin{Lemma} \label{Lem:KRicbound}
For any $A < \infty$ there are constants $C_3 = C_3 (A) < \infty$, $\theta_3 = \theta_3(A) > 0$ such that the following holds:

Let $(\M^n, (g_t)_{t \in [0,1]})$ be a Ricci flow on a compact, $n$-dimensional manifold $\M$ with the property that $\nu [ g_{-1}, 4 ] \geq - A$.
Assume that $|R| \leq R_0$ on $\M \times [-1,1]$ for some constant $0 \leq R_0 \leq 1$.
Then for any $0 \leq s < t \leq 1$ with $t-s \leq \theta_3 (1-s)$ and any $x \in \M$, we have
\[ \int_s^t \int_\M K(y,s; x, t) |{\Ric}| (y,s) dg_s (y) ds < C_3 R_0^{1/2} \sqrt{t-s}. \]
\end{Lemma}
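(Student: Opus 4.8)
The plan is to read the kernel in the statement as the conjugate heat kernel based at the later point $(x,t)$ --- in the notation of the introduction, $K(x,t;y,\sigma)$ with the integration variable $\sigma$ ranging over $[s,t]$ --- and then to derive the bound directly from the $L^2$-in-$|{\Ric}|$ estimate of Lemma~\ref{Lem:KRicless1}, the mass-conservation identity (\ref{eq:Kintis1}), and a single application of Cauchy--Schwarz. The argument will be short.

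First I would record that, by (\ref{eq:Kintis1}), $\int_\M K(x,t;y,\sigma)\,dg_\sigma(y)=1$ for every $\sigma\in[s,t)$, so that
\[ \int_s^t\int_\M K(x,t;y,\sigma)\,dg_\sigma(y)\,d\sigma = t-s . \]
Next I would apply Lemma~\ref{Lem:KRicless1} at the point $(x,t)$, which yields a bound for the corresponding $|{\Ric}|^2$-integral over $[0,t]$; since $[s,t]\subseteq[0,t]$ and the integrand is nonnegative, shrinking the time domain only decreases the integral, so
\[ \int_s^t\int_\M K(x,t;y,\sigma)\,|{\Ric}|^2(y,\sigma)\,dg_\sigma(y)\,d\sigma \le R_0 . \]
Finally, writing $K\,|{\Ric}| = K^{1/2}\cdot\big(K^{1/2}|{\Ric}|\big)$ and applying Cauchy--Schwarz with respect to the measure $dg_\sigma(y)\,d\sigma$ on $\M\times[s,t]$, the two displays above would give
\[ \int_s^t\int_\M K(x,t;y,\sigma)\,|{\Ric}|(y,\sigma)\,dg_\sigma(y)\,d\sigma \le (t-s)^{1/2}\,R_0^{1/2} , \]
which is the asserted inequality with $C_3:=1$ and any $\theta_3\in(0,1]$ (if a strict inequality is required, enlarge $C_3$ slightly).

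I do not expect any genuine obstacle here: every step is either an equality, an invocation of an already-established estimate, or Cauchy--Schwarz. The one point I would be careful about is the reduction of the time interval in the $|{\Ric}|^2$-integral from $[0,t]$ down to $[s,t]$, which is legitimate precisely because the integrand is nonnegative. I would also note that, for this (backward-in-time) form of the estimate, neither the restriction $t-s\le\theta_3(1-s)$ nor the hypotheses involving the larger interval $[-1,1]$ (existence of the flow there and the bound $\nu[g_{-1},4]\ge-A$) are actually used; they appear here only in parallel with the companion Lemma~\ref{Lem:KRicreverseintegralbound}, whose forward-in-time analogue does need the heat-kernel reversion of Lemma~\ref{Lem:hkreversion} and hence the enlarged existence interval.
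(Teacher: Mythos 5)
Your computation is internally correct, but it bounds the wrong quantity: you have read the kernel in Lemma \ref{Lem:KRicbound} as the conjugate heat kernel $K(x,t;y,\sigma)$ based at the \emph{later} point $(x,t)$. The notation in the statement is admittedly ambiguous (the integration variable clashes with the endpoint $s$), but both the paper's proof and the only place the lemma is used --- the Duhamel estimate in the proof of Proposition \ref{Prop:genmax} --- show that the intended kernel is $K(y,\sigma;x,s)$ with $\sigma\in[s,t]$, i.e.\ the heat kernel based at the \emph{earlier} spacetime point $(x,s)$ and evaluated at later times; this is the kernel that arises when one represents a solution of the backward inequality $-\partial_t f\le \Delta f + H|{\Ric}|f - Rf$ by Duhamel's principle. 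For that kernel neither (\ref{eq:Kintis1}) nor Lemma \ref{Lem:KRicless1} applies directly: the spatial integral $\int_\M K(y,\sigma;x,s)\,dg_\sigma(y)$ is not $1$ (one only gets $\le e^{R_0(\sigma-s)}$), and Lemma \ref{Lem:KRicless1} is a statement about the conjugate kernel based at a later point, so your Cauchy--Schwarz step has no direct analogue.

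This is precisely why the paper's argument is not a one-line Cauchy--Schwarz: it decomposes $[s,t]$ into dyadic intervals $[s+(t-s)2^{-k},\, s+2(t-s)2^{-k}]$ and on each piece invokes Lemma \ref{Lem:KRicreverseintegralbound}, which in turn rests on the kernel-reversal Lemma \ref{Lem:hkreversion} (comparing $K(y,\sigma;x,s)$ with a conjugate kernel based at a suitable later time $t_3$, via the two-sided Gaussian bounds of \cite[Theorem 1.4]{Bamler-Zhang}) before Cauchy--Schwarz and Lemma \ref{Lem:KRicless1} can be applied; summing over the scales produces the factor $\sqrt{t-s}$. This is also exactly where the hypotheses you declared superfluous enter: the existence of the flow on $[-1,1]$ and the bound $\nu[g_{-1},4]\ge -A$ are needed for the Gaussian bounds and the reversal, and the restriction $t-s\le\theta_3(1-s)$ guarantees that the reversed base time $t_3\le 1$ exists at every dyadic scale. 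The fact that your reading left all of these hypotheses idle was the signal that the statement had been misidentified; as written, your proof establishes only the easy conjugate-kernel bound (which is in effect the inner step of the proof of Lemma \ref{Lem:KRicreverseintegralbound}), not the estimate that Proposition \ref{Prop:genmax} requires.
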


\begin{proof}
Choose $\theta_3 (A) = \theta_2(A)$.
Then, using Lemma \ref{Lem:KRicreverseintegralbound},
\begin{alignat*}{1}
\int_s^t \int_\M K(y,s; x,t) & |{\Ric}| (y,s) dg_s (y) \\
 &= \sum_{k=1}^\infty \int_{s + (t-s) 2^{-k}}^{s + 2 (t-s) 2^{-k}} \int_\M K(y,s; x,t) |{\Ric}| (y,s) dg_s (y) ds \\
&\leq \sum_{k=1}^\infty C_2 R_0^{1/2} \sqrt{ (t-s) 2^{-k}} \\ &= C_2 R_0^{1/2} \sqrt{t-s} \sum_{k=1}^\infty 2^{-k/2} \\
&\leq C C_2 R_0^{1/2}  \sqrt{t-s}.
\end{alignat*}
This proves the desired estimate.
\end{proof}

\begin{Proposition} \label{Prop:genmax}
For every $A < \infty$ there are constants $\theta_4 = \theta_4 (A) > 0$ and $C_4 = C_4 (A) < \infty$ such that the following holds:

Let $(\M^n, (g_t)_{t \in [-1, 1]})$ be a Ricci flow on a compact, $n$-dimensional manifold $\M$ with the property that $\nu[g_{-1}, 4] \geq - A$.
Assume that $|R| \leq R_0$ on $\M \times [-1, 1]$ for some constant $0 \leq R_0 \leq 1$.
Let $H > 1$ and $[t_1, t_2] \subset [0,1)$ be a sub-interval with $t_2 - t_1 \leq  \theta_4 \min \{ (1- t_1), H^{-1} \}$ and consider a non-negative function $f \in C^\infty (\M \times [t_1,t_2])$ that satisfies the following evolution inequality in the barrier sense:
\[ - \partial_t f \leq \Delta f + H |{\Ric}| f - R f. \]
Then
\[ \max_\M f (\cdot, t_1) \leq \big( 1 + C_4 H R_0^{1/2} \sqrt{t_2 - t_1} \big) \max_\M f (\cdot, t_2). \]
\end{Proposition}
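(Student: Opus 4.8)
The plan is to integrate the evolution inequality against the forward heat kernel based at a spatial maximum of $f$, to absorb the error term using Lemma~\ref{Lem:KRicbound}, and to close the argument by a bootstrap in time. Write $M(\tau):=\max_{\M}f(\cdot,\tau)$ for $\tau\in[t_1,t_2]$ and set $P:=\max_{\tau\in[t_1,t_2]}M(\tau)<\infty$ (finite by compactness of $\M$ and smoothness of $f$). Fix $\tau\in[t_1,t_2)$, pick $x_\tau\in\M$ with $f(x_\tau,\tau)=M(\tau)$, and consider $\phi(s):=\int_{\M}f(y,s)\,K(y,s;x_\tau,\tau)\,dg_s(y)$ for $s\in(\tau,t_2]$. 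Differentiating, and using $\partial_s(dg_s)=-R\,dg_s$, the forward heat equation $(\partial_s-\Delta_y)K(\cdot,\cdot;x_\tau,\tau)=0$, integration by parts on the closed manifold $\M$, and the evolution inequality rewritten as $\partial_s f+\Delta f-Rf\ge-H|{\Ric}|f$ (which is exactly what the barrier hypothesis provides, after the usual mollification), one gets
\[
 \phi'(s)\ \ge\ -\,H\int_{\M}|{\Ric}|(y,s)\,f(y,s)\,K(y,s;x_\tau,\tau)\,dg_s(y)
\]
in the sense of lower right derivatives. As $s\searrow\tau$ one has $K(\cdot,s;x_\tau,\tau)\,dg_s\to\delta_{x_\tau}$, so $\phi(s)\to M(\tau)$; and since $\tfrac{d}{ds}\int_{\M}K(y,s;x_\tau,\tau)\,dg_s(y)=-\int_{\M}R\,K\,dg_s$ with $|R|\le R_0$, Gr\"onwall gives $\int_{\M}K(y,t_2;x_\tau,\tau)\,dg_{t_2}(y)\le e^{R_0(t_2-\tau)}$, so $\phi(t_2)\le e^{R_0(t_2-\tau)}M(t_2)$. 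Integrating the differential inequality over $[\tau,t_2]$,
\[
 M(\tau)\ \le\ e^{R_0(t_2-\tau)}M(t_2)\ +\ H\int_\tau^{t_2}\!\!\int_{\M}|{\Ric}|(y,s)\,f(y,s)\,K(y,s;x_\tau,\tau)\,dg_s(y)\,ds.
\]

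In the error term I would bound $f\le M(s)\le P$ and apply Lemma~\ref{Lem:KRicbound} with base time $\tau$; after fixing $\theta_4\le\theta_3$, its time-condition $t_2-\tau\le\theta_3(1-\tau)$ follows from $t_2-t_1\le\theta_4(1-t_1)$ because $\tau\mapsto\frac{t_2-\tau}{1-\tau}$ is decreasing. This yields $M(\tau)\le e^{R_0(t_2-t_1)}M(t_2)+H\,C_3\,R_0^{1/2}\sqrt{t_2-\tau}\;P$. Evaluating this at a point $\tau_*$ where $M$ attains its maximum $P$ (so that $M(s)\le P$ for all $s$, as the error bound requires), we get $P\bigl(1-H\,C_3\,R_0^{1/2}\sqrt{t_2-\tau_*}\bigr)\le e^{R_0(t_2-t_1)}M(t_2)$. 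Choosing $\theta_4=\theta_4(A)$ small (in terms of $C_3$) so that $H\,C_3\,R_0^{1/2}\sqrt{t_2-t_1}\le\tfrac12$ — this is where the hypothesis $t_2-t_1\le\theta_4\min\{(1-t_1),H^{-1}\}$ together with $R_0\le1$ is used (subdividing $[t_1,t_2]$ in time if necessary) — forces $P\le 2\,e^{R_0(t_2-t_1)}M(t_2)$.

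Feeding $P\le 2\,e^{R_0(t_2-t_1)}M(t_2)$ back into the displayed inequality at $\tau=t_1$ gives $M(t_1)\le e^{R_0(t_2-t_1)}\bigl(1+2H\,C_3\,R_0^{1/2}\sqrt{t_2-t_1}\bigr)M(t_2)$. Since $R_0\le1$ and $t_2-t_1\le1$ give $R_0(t_2-t_1)\le R_0^{1/2}\sqrt{t_2-t_1}\le\tfrac12$, one has $e^{R_0(t_2-t_1)}\le1+2R_0^{1/2}\sqrt{t_2-t_1}$, and, using $H>1$, the two factors collapse into a single $1+C_4 H R_0^{1/2}\sqrt{t_2-t_1}$ with $C_4=C_4(A)$, as claimed. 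The main obstacle is the bootstrap step: $f$ is not a priori controlled at intermediate times, and the estimate only closes because the smallness of $t_2-t_1$ pushes the coefficient $H\,C_3\,R_0^{1/2}\sqrt{t_2-t_1}$ strictly below $1$; a secondary technical point is passing from the barrier hypothesis to the heat-kernel computation, which is handled by standard mollification and the weak maximum principle.
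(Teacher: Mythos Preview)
Your proposal is correct and follows essentially the same approach as the paper: both arguments integrate the evolution inequality against the forward heat kernel $K(y,s;x,\tau)$, bound the resulting Ricci error term by Lemma~\ref{Lem:KRicbound}, and close via a bootstrap once $C_3 H R_0^{1/2}\sqrt{t_2-t_1}<\tfrac12$. The paper writes the Duhamel-type inequality for $f(x,t)$ directly and takes the supremum over $(x,t)\in\M\times[t_1,t_2]$ in one step, while you reach the same inequality by differentiating $\phi(s)$ at a chosen maximizer $x_\tau$ and then introduce $P$ and $\tau_*$; your extra ``feed-back'' iteration is harmless but not needed, since the paper simply divides through by $1-C_3 H R_0^{1/2}\sqrt{t_2-t}$ once.
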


Note that with similar techniques, we can analyze the evolution inequality $- \partial_t f \leq \Delta f + H |{\Ric}|^p f$ for any $p \in (0,2)$.

\begin{proof}
We first find that that for any $(x,t) \in \M \times [-1,1)$ and $t < s \leq 1$
\begin{multline*}
 \frac{d}{ds} \int_{\M} K(y,s; x,t) dg_s (y) = \int_{\M} \big( \Delta_y K(y,s; x,t) - K(y,s; x,t) R(y,s) \big) dg_s (y) \\
  \leq  R_0 \int_{\M} K(y,s; x,t) dg_s (y),
\end{multline*}
which implies
\[ \int_{\M} K(y,s; x,t) dg_s (y) \leq e^{R_0 (s-t)}. \]

So for any $(x,t) \in \M \times [t_1, t_2]$ we have by Lemma \ref{Lem:KRicbound}, assuming $\theta_4 \leq \theta_3$ and $C_3 > 1$,
\begin{alignat*}{1}
 f(x,t) &\leq \int_\M K(y, t_2; x, s) f (y, t_2) dg_{t_2} (y)  \\
 &\qquad\qquad + \int_{t}^{t_2} \int_\M K(y,s; x, t) \cdot  H |{\Ric}| (y,s)  \cdot f(y,s) dg_s (y) ds \\
 &\leq e^{R_0 (t_2 - t)} \max_\M f (\cdot, t_2) + H \big( \max_{\M \times [t, t_2]} f \big) \int_t^{t_2} \int_\M K(y,s; x,t)  |{\Ric}|  (y,s)  dg_s (y) ds \\
 &\leq e^{R_0 (t_2 - t)} \max_\M f (\cdot, t_2) + H \big( \max_{\M \times [t, t_2]} f \big) \cdot C_3 R_0^{1/2} \sqrt{t_2 - t}  .
\end{alignat*}
It follows that
\[ \max_{\M \times [t, t_2]} f \leq e^{R_0 (t_2 - t)} \max_\M f (\cdot, t_2) + \big( \max_{\M \times [t, t_2]} f \big) \cdot  C_3  H R_0^{1/2} \sqrt{t_2 - t}. \]
So if $t_2 - t < (2C_3 H)^{-2}$, then
\[ \max_{\M \times [t, t_2]} f \leq \frac{e^{R_0 (t_2 - t)} \max_\M f(\cdot, t_2) }{1 -  C_3 H R_0^{1/2} \sqrt{t_2 - t}} \leq \big(1+ 10 C_3 H R_0^{1/2} \sqrt{t_2 - t} \big) \max_\M f(\cdot, t_2). \]
This finishes the proof.
\end{proof}

\section{Proof of Theorem \ref{Thm:main}}
We will first establish a lower bound on the distortion of the distance:

\begin{Lemma} \label{Lem:lowerbound}
For every $A < \infty$ there is a constant $C_5 = C_5 (A) < \infty$ such that the following holds:

Let $(\M^n, (g_t)_{t \in [-1, 1]})$ be a Ricci flow on a compact, $n$-dimensional manifold $\M$ with the property that $\nu[g_{-1}, 4] \geq - A$.
Assume that $|R| \leq 1$ on $\M \times [-1, 1]$.
Let $[t_1, t_2] \subset [0,1]$ be a sub-interval and consider two points $x_1, x_2 \in \M$.
Then
\[ d_{t_2} (x_1,x_2) \geq d_{t_1} (x_1,x_2) - C_5 \sqrt{t_2 - t_1}. \]
\end{Lemma}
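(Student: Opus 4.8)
The plan is to average the distance function against conjugate heat kernels and to exploit a two‑sided differential inequality for $d_t$ in which the Ricci terms cancel. First I would record the key pointwise fact: for a Ricci flow, a time $t$, and $x\neq y$ with $y$ outside the time‑$t$ cut locus of $x$, let $\gamma:[0,L]\to\M$ with $L=d_t(x,y)$ be the unit‑speed minimizing $g_t$‑geodesic. Then Hamilton's first variation formula gives $\partial_t d_t(x,y)=-\int_0^L\Ric(\dot\gamma,\dot\gamma)\,ds$, while the index‑form comparison for the Laplacian, applied at the $x$‑endpoint with test function $\psi(s)=\cos\frac{\pi s}{2L}$ and at the $y$‑endpoint with $\phi(s)=\sin\frac{\pi s}{2L}$, gives $\Delta_x d_t(x,y)+\Delta_y d_t(x,y)\le\int_0^L\big((n-1)(\dot\psi^2+\dot\phi^2)-(\psi^2+\phi^2)\Ric(\dot\gamma,\dot\gamma)\big)\,ds$. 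Since $\psi^2+\phi^2\equiv1$ and $\dot\psi^2+\dot\phi^2\equiv\frac{\pi^2}{4L^2}$, the curvature contributions cancel and, in the barrier sense (using that the singular part of the distributional Laplacian of a distance function is nonpositive),
\[
(\partial_t-\Delta_x-\Delta_y)\,d_t(x,y)\ \ge\ -\frac{(n-1)\pi^2}{4\,d_t(x,y)};
\]
note that no curvature appears on the right, and this bound is sharp (equality holds on a shrinking round sphere).

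Next I would fix $[t_1,t_2]\subset[0,1]$ and, for $t\in[t_1,t_2)$, set $K_i(\cdot,t):=K(x_i,t_2;\cdot,t)$ (the conjugate heat kernel based at $(x_i,t_2)$), so that $K_i(\cdot,t)\,dg_t$ is a probability measure and $\partial_t(K_i(\cdot,t)\,dg_t)=-\Delta K_i(\cdot,t)\,dg_t$. Consider
\[
w(t):=\int_\M\int_\M d_t(x,y)\,K_1(x,t)\,K_2(y,t)\,dg_t(x)\,dg_t(y),
\]
which is continuous on $[t_1,t_2]$ with $w(t_2)=d_{t_2}(x_1,x_2)$. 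Differentiating on $(t_1,t_2)$, integrating by parts twice in $x$ and twice in $y$ (legitimate because $K_i\ge0$ and the singular part of $\Delta d_t$ has a sign), and inserting the pointwise inequality, one gets
\[
w'(t)\ \ge\ -\frac{(n-1)\pi^2}{4}\int_\M\int_\M\frac{K_1(x,t)\,K_2(y,t)}{d_t(x,y)}\,dg_t(x)\,dg_t(y).
\]

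The hard part will be bounding this last double integral by $C(A,n)/\sqrt{t_2-t_1}$; granted that, $w'\ge-C'/\sqrt{t_2-t_1}$ forces $w(t_1)\le w(t_2)+C'\sqrt{t_2-t_1}$. By the Gaussian upper bound of \cite[Theorem 1.4]{Bamler-Zhang} we have $K_i(\cdot,t)\le C^*_1(t_2-t)^{-n/2}$, and since $n\ge2$ the density $1/d_t(x,\cdot)$ is locally integrable with $\int_{B(x,t,1)}d_t(x,y)^{-1}dg_t(y)$ bounded uniformly by Lemma \ref{Lem:volBr}; using Lemma \ref{Lem:volBr} again to absorb the Gaussian tails, one finds that for $x$ within $\sqrt{t_2-t_1}$ of $x_1$ the inner integral $\int_\M K_2(y,t)d_t(x,y)^{-1}dg_t(y)$ is at most $C/d_t(x_1,x_2)$, and hence the whole double integral is at most $C/d_t(x_1,x_2)$ — provided $d_t(x_1,x_2)\ge c\sqrt{t_2-t_1}$ for every $t\in[t_1,t_2]$. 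That lower bound is obtained by a bootstrap: if $d_{t_1}(x_1,x_2)\le M_0\sqrt{t_2-t_1}$ for a suitable absolute $M_0$ then the lemma holds trivially since $d_{t_2}\ge0$; otherwise let $T$ be the first time $d_t(x_1,x_2)$ reaches $M_0\sqrt{t_2-t_1}$ (with $T=t_2$ if this never happens), apply the argument on each $[t_1,T']$ with $T'<T$ and let $T'\nearrow T$ to obtain $d_{t_1}(x_1,x_2)\le d_T(x_1,x_2)+C_5'\sqrt{T-t_1}$ with $C_5'=C_5'(A,n)$ independent of $M_0$, which closes the bootstrap when $T=t_2$ and is compatible with the trivial case otherwise. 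Finally, the triangle inequality together with $\int_\M d_{t_1}(x_i,\cdot)\,K_i(\cdot,t_1)\,dg_{t_1}\le C\sqrt{t_2-t_1}$ (once more from the Gaussian bound and Lemma \ref{Lem:volBr}) gives $w(t_1)\ge d_{t_1}(x_1,x_2)-2C\sqrt{t_2-t_1}$, whence $d_{t_1}(x_1,x_2)\le w(t_1)+2C\sqrt{t_2-t_1}\le w(t_2)+C_5\sqrt{t_2-t_1}=d_{t_2}(x_1,x_2)+C_5\sqrt{t_2-t_1}$, as desired.

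The step I expect to cause the most trouble is thus the control of $\int_\M\int_\M K_1K_2/d_t$ uniformly down to scale $\sqrt{t_2-t_1}$: one must handle simultaneously the mild diagonal singularity of $1/d_t(x,\cdot)$ and the Gaussian tails of the two kernels, and the bootstrap must be arranged so that the constant coming out of the main estimate does not re‑enter the lower bound on $d_t(x_1,x_2)$. The regularity bookkeeping in the first two steps — the barrier sense of the inequality for $(\partial_t-\Delta_x-\Delta_y)d_t$, the differentiability of $w$ on $(t_1,t_2)$, and its continuity up to $t_2$ — is routine for distance functions on a Ricci flow background but should still be carried out with care.
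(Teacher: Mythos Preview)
Your coupling argument via the two--point inequality $(\partial_t-\Delta_x-\Delta_y)d_t\ge -\tfrac{(n-1)\pi^2}{4d_t}$ and two conjugate heat kernels is essentially correct and can be made rigorous along the lines you sketch; the control of $\int\int K_1K_2/d_t$ under the bootstrap $d_t(x_1,x_2)\ge M_0\sqrt{t_2-t_1}$ does go through once one separates the near--diagonal and far regions and uses the Gaussian bound together with Lemma~\ref{Lem:volBr}. One small correction: the double integral is not uniformly $\le C/\sqrt{t_2-t_1}$ in $t$ but rather $\le C/d_t(x_1,x_2)+C/\sqrt{t_2-t}$; both terms integrate to $O(\sqrt{t_2-t_1})$, so the conclusion is unaffected.

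That said, the paper's proof is considerably shorter and avoids both the singular integral and the bootstrap. The paper runs the \emph{forward} heat equation $\partial_t u=\Delta u$ with initial data $u(\cdot,t_1)=d_{t_1}(x_1,\cdot)$, and uses that on a Ricci flow background the Bochner identity gives $(\partial_t-\Delta)|\nabla u|^2=-2|\nabla^2u|^2\le 0$ with the Ricci terms cancelling, so the ordinary maximum principle preserves $|\nabla u|\le 1$. This yields $|u(x_1,t_2)-u(x_2,t_2)|\le d_{t_2}(x_1,x_2)$ immediately; the same Gaussian moment estimate you quote at the end (which is exactly the paper's computation) then gives $|u(x_1,t_2)-u(x_2,t_2)|\ge d_{t_1}(x_1,x_2)-2C\sqrt{t_2-t_1}$. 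Thus the paper exploits the cancellation at the level of $|\nabla u|$ for a single heat solution, whereas you exploit it at the level of the two--point function $d_t(x,y)$; the former sidesteps the $1/d_t$ singularity entirely and needs no case distinction on the size of $d_{t_1}(x_1,x_2)$. Your approach has the virtue of being a genuine coupling/transport argument and the two--point inequality is of independent interest, but for this particular lemma the one--function forward heat flow is the cleaner route.
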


\begin{proof}
Set $d := d_{t_1} (x_1,x_2)$ and let $u \in C^0 (\M \times [t_1, t_2]) \cap C^\infty (\M \times (t_1, t_2])$ be a solution to the heat equation
\[ \partial_t u = \Delta u, \qquad u (\cdot, t_1 ) = d_{t_1} (x_1, \cdot). \]
Then for any $(x,t) \in \M \times [t_1, t_2]$
\[ u(x,t) = \int_\M K(x,t; y,t_1) u(t_1) dg_{t_1}(y) = \int_\M K(x,t; y,t_1) d_{t_1} (x_1, y) dg_{t_1} (y). \]
Using \cite[Theorem 1.4]{Bamler-Zhang} (compare also with (\ref{eq:hkbound})), we find that by Lemma \ref{Lem:volBr}
\begin{alignat*}{1}
 u(x_1, t_2) &\leq \int_\M \frac{C_1^*}{(t_2 - t_1)^{n/2}} \exp \bigg({ -\frac{d^2_{t_1}(x_1, y)}{C^*_2 (t_2 - t_1)}} \bigg) d_{t_1} (x_1, y) dg_{t_1} (y) \\
 &= \sum_{k = -\infty}^\infty \int_{B(x_1, t_1, 2^{k}) \setminus B(x_1, t_1, 2^{k-1})} \frac{C_1^*}{(t_2 - t_1)^{n/2}} \exp \bigg({ - \frac{d_{t_1}^2 (x_1, y)}{C^*_2 (t_2 - t_1)}} \bigg) \\
 &\qquad\qquad\qquad\qquad\qquad\qquad\qquad\qquad\qquad\qquad\qquad \cdot d_{t_1} (x_1, y) dg_{t_1} (y)  \\
 &\leq \sum_{k = - \infty}^\infty |B(x_1, t_1, 2^{k})|_{t_1} \frac{C_1^*}{(t_2 - t_1)^{n/2}} \exp \bigg({ - \frac{2^{2k-2}}{C^*_2 (t_2 - t_1)}} \bigg) \cdot 2^k \\
 &\leq \sum_{k = -\infty}^\infty C_0 (2^k)^n e^{C_0 2^k} \frac{C_1^*}{(t_2 - t_1)^{n/2}} \exp \bigg({ - \frac{2^{2k}}{4C^*_2 (t_2 - t_1)}} \bigg) \cdot 2^k \\
 &\leq \int_{\IR^n} \frac{C C_0 C^*_1}{(t_2 - t_1)^{n/2}} \exp \bigg({2C_0 |x| - \frac{|x|^2}{4 C^*_2 (t_2 - t_1)} } \bigg) |x| dx \\
 &= \sqrt{t_2 - t_1} \int_{\IR^n} C C_0 C^*_1 \exp \bigg({ 2C_0 |x| \sqrt{t_2 - t} - \frac{|x|^2}{4 C^*_2} } \bigg) |x| dx \leq C \sqrt{t_2 - t_1}
\end{alignat*}
On the other hand, using (\ref{eq:Kintis1}),
\begin{multline*}
 |d - u(x_2, t_2)|  = \bigg| \int_\M K(x_2,t; y,t_1) (d - d_{t_1} (x_1, y) ) dg_{t_1} (y) \bigg| \\
  \leq \int_\M K(x_2,t; y,t_1) | d_{t_1} (x_1, x_2) - d_{t_1} (x_1, y) | dg_{t_1} (y) 
 \leq \int_\M K(x_2,t; y,t_1)d_{t_1} (x_2, y) dg_{t_1} (y).
\end{multline*}
So similarly,
\[ | d - u(x_2, t_2) | \leq C \sqrt{t_2 - t_1}. \]
It follows that
\begin{equation} \label{eq:u1u2dist}
 | u(x_1, t_2) - u(x_2, t_2 ) | \geq d - 2 C \sqrt{t_2 - t_1}.
\end{equation}

Next, consider the quantity $|\nabla u |$ on $\M \times [t_1, t_2]$.
It is not hard to check that, in the barrier sense,
\begin{equation} \label{eq:dtnablau}
 \partial_t |\nabla u| \leq \Delta |\nabla u|.
\end{equation}
Since $|\nabla u | (\cdot, t_1) \leq 1$, we have by the maximum principle that $|\nabla u|  \leq 1$ on $\M \times [t_1, t_2]$.
So
\[ | u (x_1, t_2) - u(x_2, t_2 ) | \leq d_{t_2} (x_1, x_2). \]
Together with (\ref{eq:u1u2dist}) this gives us
\[ d_{t_2} (x_1, x_2) \geq d - 2 C \sqrt{t_2 - t_1} = d_{t_1} (x_1, x_2) - 2 C \sqrt{t_2 - t_1}. \]
This finishes the proof.
\end{proof}

For the upper bound on the distance distortion, we will argue similarly, by reversing time.
The derivation of the bound on $|\nabla u|$ will now be more complicated, since the equation (\ref{eq:dtnablau}) will have an extra $4 |{\Ric}| |\nabla u|$ term.
We will overcome this difficulty by applying the generalized maximum principle from Proposition \ref{Prop:genmax}.

\begin{Lemma} \label{Lem:upperbound}
For every $A < \infty$ there are constants $\theta_6 = \theta_6 (A) > 0$ and $C_6 = C_6 (A) < \infty$ such that the following holds:

Let $(\M^n, (g_t)_{t \in [-1, 1]})$ be a Ricci flow on a compact, $n$-dimensional manifold $\M$ with the property that $\nu[g_{-1}, 4] \geq - A$.
Assume that $|R| \leq R_0$ on $\M \times [-1, 1]$ for some constant $0 \leq R_0 \leq 1$.
Let $[t_1, t_2] \subset [0,1)$ be a sub-interval with $t_2 - t_1 \leq \theta_6 (1- t_1)$ and consider two points $x_1, x_2 \in \M$.
Then
\[ d_{t_2} (x_1, x_2) \leq \exp \big({  C_6 R_0^{1/2} \sqrt{t_2 - t_1} } \big) d_{t_1} (x_1, x_2) + C_6 \sqrt{t_2 - t_1}. \]
\end{Lemma}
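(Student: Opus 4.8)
The proof runs parallel to that of Lemma \ref{Lem:lowerbound}, but with the heat flow run backwards in time and with Proposition \ref{Prop:genmax} replacing the naive maximum principle for $|\nabla u|$.

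\emph{Set-up.} Let $u\in C^0(\M\times[t_1,t_2])\cap C^\infty(\M\times[t_1,t_2))$ solve the backward heat equation $-\partial_t u=\Delta u$ with terminal value $u(\cdot,t_2)=d_{t_2}(x_1,\cdot)$; equivalently $u(x,t)=\int_\M \overline K(x,t;y,t_2)\,d_{t_2}(x_1,y)\,dg_{t_2}(y)$, where $\overline K$ is the fundamental solution of $-\partial_t-\Delta$ on the flow. Applied to the reverse-time flow $t\mapsto g_{t_2-t}$ (whose time slices satisfy the same non-collapsing and scalar curvature bounds), the estimates of \cite{Bamler-Zhang} give Gaussian bounds for $\overline K$ of the same shape as (\ref{eq:hkbound}), now measured with $d_{t_2}$, together with $\int_\M\overline K(x,t;y,t_2)\,dg_{t_2}(y)=1$; in particular $u\ge 0$. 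Since $u(\cdot,t_2)$ is $1$-Lipschitz with respect to $g_{t_2}$, we have $|\nabla u|(\cdot,t_2)\le 1$.

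\emph{Gradient estimate at $t_1$.} A Bochner computation gives, in the barrier sense, $-\partial_t|\nabla u|\le\Delta|\nabla u|+4|{\Ric}|\,|\nabla u|$: in reverse time the Ricci term coming from $\partial_t g^{ij}$ and the one in Bochner's formula add rather than cancel as in (\ref{eq:dtnablau}), and — crucially — running the plain heat equation (rather than the conjugate one) produces no uncontrolled $\nabla R$ term. Setting $f:=e^{-R_0(t_2-t)}|\nabla u|$ and using $R\le R_0$ we obtain $-\partial_t f\le\Delta f+4|{\Ric}|f-Rf$, so $f$ has the form required by Proposition \ref{Prop:genmax} with $H=4$. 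Applying it — after replacing $u(\cdot,t_2)$ by a smooth $(1+\delta)$-Lipschitz approximation and letting $\delta\to0$, so that all quantities are smooth up to $t_2$ and $|\nabla u|(\cdot,t_2)\le1$ survives in the limit — and choosing $\theta_6$ small enough that $t_2-t_1\le\theta_6(1-t_1)$ forces the smallness conditions of Proposition \ref{Prop:genmax} with $H=4$, we get
\[ \max_\M|\nabla u|(\cdot,t_1)\le e^{R_0(t_2-t_1)}\big(1+CR_0^{1/2}\sqrt{t_2-t_1}\,\big)\le\exp\big(CR_0^{1/2}\sqrt{t_2-t_1}\,\big), \]
hence $\big|u(x_1,t_1)-u(x_2,t_1)\big|\le\exp\big(CR_0^{1/2}\sqrt{t_2-t_1}\,\big)\,d_{t_1}(x_1,x_2)$.

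\emph{Endpoint values and conclusion.} Exactly as in the estimate for $u(x_1,t_2)$ in the proof of Lemma \ref{Lem:lowerbound} — using the Gaussian upper bound for $\overline K$ (in terms of $d_{t_2}$), the upper volume bound of Lemma \ref{Lem:volBr} for $g_{t_2}$-balls, a dyadic decomposition and the substitution $x\mapsto\sqrt{t_2-t_1}\,z$ — one checks that for $z\in\{x_1,x_2\}$
\[ \int_\M\overline K(z,t_1;y,t_2)\,d_{t_2}(z,y)\,dg_{t_2}(y)\le C\sqrt{t_2-t_1}. \]
With $z=x_1$ this gives $0\le u(x_1,t_1)\le C\sqrt{t_2-t_1}$; with $z=x_2$, together with $d_{t_2}(x_1,y)\ge d_{t_2}(x_1,x_2)-d_{t_2}(x_2,y)$ and $\int_\M\overline K\,dg_{t_2}=1$, it gives $u(x_2,t_1)\ge d_{t_2}(x_1,x_2)-C\sqrt{t_2-t_1}$. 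Hence
\[ d_{t_2}(x_1,x_2)\le u(x_2,t_1)+C\sqrt{t_2-t_1}\le u(x_1,t_1)+\big|u(x_1,t_1)-u(x_2,t_1)\big|+C\sqrt{t_2-t_1}, \]
and inserting $u(x_1,t_1)\le C\sqrt{t_2-t_1}$ and the gradient estimate yields the claimed bound with $C_6$ the largest of the constants involved.

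\emph{Main obstacle.} The technical heart is the gradient step: one must run the correct parabolic equation backwards so that the evolution inequality for $|\nabla u|$ fits the template of Proposition \ref{Prop:genmax} with no uncontrollable $\nabla R$ term, and one must push the maximum principle up to the non-smooth terminal time $t_2$ via approximation. The second point needing care is that the Gaussian estimates and the $L^1$-normalization used for the endpoint values must be available for the reverse-time kernel $\overline K$; these follow from the heat kernel estimates of \cite{Bamler-Zhang} applied to the time-reversed flow.
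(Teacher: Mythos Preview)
Your overall strategy is the same as the paper's, and the gradient step is fine (your trick of setting $f=e^{-R_0(t_2-t)}|\nabla u|$ to manufacture the $-Rf$ term is a harmless variant of the paper's use of $|R|\le\sqrt n\,|{\Ric}|$). The problem is the endpoint step.

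You invoke Gaussian bounds for the kernel $\overline K$ of the backward heat operator $-\partial_t-\Delta$ by saying that the estimates of \cite{Bamler-Zhang} apply to ``the reverse-time flow $t\mapsto g_{t_2-t}$.'' But that reversed flow satisfies $\partial_\tau g=+2\Ric$, not the Ricci flow equation; the heat-kernel bounds of \cite{Bamler-Zhang} rely on Ricci-flow monotonicity (Perelman's entropy, the $\nu$-functional hypothesis, etc.) and do not transfer to a backward Ricci flow. Equivalently, as a function of $(y,t_2)$ your kernel satisfies $(\partial_{t_2}-\Delta_y)\overline K=R\,\overline K$, which is not the equation governed by \cite[Theorem~1.4]{Bamler-Zhang}. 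So the Gaussian upper bound you need for $\int_\M\overline K(z,t_1;y,t_2)\,d_{t_2}(z,y)\,dg_{t_2}(y)$ is unjustified.

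The paper fills exactly this gap: alongside $u_i$ solving the backward heat equation (needed for the gradient inequality), it introduces $v_i$ solving the \emph{conjugate} heat equation $-\partial_t v_i=\Delta v_i-Rv_i$ with the same terminal data. The point is that $v_i(x,t_1)=\int_\M K(y,t_2;x,t_1)\,d_{t_2}(x_i,y)\,dg_{t_2}(y)$ is represented by the \emph{forward} heat kernel $K$, for which \cite{Bamler-Zhang} does give a Gaussian bound in terms of $d_{t_2}$ (see the remark after their Theorem~1.4). A one-line maximum-principle comparison then yields $u_i\le e^{R_0(t_2-t)}v_i$, which transfers the bound $v_i(x_i,t_1)\le C\sqrt{t_2-t_1}$ to $u_i$. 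From there the paper gets the lower bound on $u_1(x_2,t_1)$ via $u_1+u_2\ge d$ (your triangle-inequality argument would also work once the kernel bound is in hand). In short: keep your $u$, but route the endpoint estimates through the conjugate-heat solution $v$ rather than through a non-existent Gaussian bound for $\overline K$.
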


\begin{proof}
Set $d := d_{t_2} (x_1, x_2)$.
For $i = 1, 2$ let $u_i \in C^0 (\M \times [t_1, t_2]) \cap C^\infty (\M \times [t_1, t_2))$ be a solution to the backwards (not the conjugate!) heat equation
\begin{equation} \label{eq:backwheateqfromdist}
 - \partial_t u_i = \Delta u_i, \qquad u_i (\cdot, t_2) =  d_{t_2} (x_i, \cdot )
\end{equation}
and let $v_i \in C^0 (\M \times [t_1, t_2]) \cap C^\infty (\M \times [t_1, t_2))$ be a solution to the conjugate heat equation
\[ - \partial_t v_i = \Delta v_i - R v_i, \qquad v_i(\cdot, t_2) =  d_{t_2} (x_i, \cdot ). \]

Note that by the maximum principle, we have on $\M \times [t_1, t_2]$
\begin{equation} \label{eq:u1plusu2}
 u_1 + u_2 \geq \min_{\M} \big( u_1 (\cdot, t_2) + u_2 (\cdot, t_2) \big) \geq  \min_{\M} \big( d_{t_2} (x_1, \cdot) + d_{t_2} (x_2, \cdot) \big) \geq d.
\end{equation}
We also claim that we have for all $t \in [t_1, t_2]$
\begin{equation} \label{eq:uvcomparison}
 u_i (\cdot, t) \leq e^{R_0 (t_2 - t)} v_i ( \cdot, t).
\end{equation}
This inequality follows by the maximum principle and by the fact that whenever $v_i \geq 0$, we have
\[ (- \partial_t - \Delta ) \big( e^{R_0 (t_2 - t)} v_i (\cdot, t)  \big) = e^{R_0 (t_2 - t)} R_0 v_i (\cdot, t)  - e^{R_0 (t_2 - t)}  R(\cdot, t) v_i (\cdot, t) \geq 0. \]

We now make use of the fact that for any $x \in \M$,
\[ v_i (x, t_1) = \int_\M K(y,t_2; x,t_1) v_i (y, t_2) dg_{t_2} (y) = \int_\M K(y,t_2; x,t_1) d_{t_2} (x_i, y ) dg_{t_2} (y) \]
and
\[ K(y,t_2; x, t_1) < \frac{C^*_1}{(t_2 - t_1)^{n/2}} \exp \bigg({ - \frac{d^2_{t_2} (x, y)}{C^*_2 (t_2 - t_1)} }\bigg), \]
for some constants $C^*_1, C^*_2$, which depend only on $A$.
Note that the latter inequality is similar to (\ref{eq:hkbound}) except that the distance between $x, y$ is taken at time $t_2$.
This inequality follows from \cite[Theorem 1.4]{Bamler-Zhang} and the subsequent comment in that paper.
We can hence estimate, similarly as in the proof of Lemma \ref{Lem:lowerbound},
\[ v_i (x_i, t_1) \leq \int_\M \frac{C^*_1}{(t_2 - t_1)^{n/2}} \exp \bigg({ - \frac{ d^2_{t_2} (x_i, y)}{C^*_2 (t_2 - t_1)} }\bigg) d_{t_2}( x_i, y) dg_{t_2} (y) \leq C \sqrt{t_2 - t_1}. \]
So, using (\ref{eq:uvcomparison}), we have
\[ u_i (x_i, t_1) \leq C e^{R_0 (t_2 -t_1)} \sqrt{t_2 - t_1} \leq 10 C \sqrt{t_2 - t_1} . \]
So by (\ref{eq:u1plusu2}) we have
\[ u_1 (x_2, t_1) \geq d - u_2 (x_2, t_1) \geq d - 10 C \sqrt{t_2 - t_1}. \]
This implies
\begin{equation} \label{eq:u1minusu22}
| u_1 (x_1, t_1) - u_1 (x_2, t_2)| \geq  d - 20 C  \sqrt{t_2 - t_1}.
\end{equation}

Taking derivatives of (\ref{eq:backwheateqfromdist}), we obtain the evolution inequality
\[ - \partial_t |\nabla u_1 | \leq \Delta |\nabla u_1 | + 4 |{\Ric}| \cdot |\nabla u_1 | \leq \Delta |\nabla u_1 | + (4 + \sqrt{n}) |{\Ric}| \cdot |\nabla u_1 | - R |\nabla u_1 |, \]
which holds in the barrier sense.
Note that by definition $|\nabla u_1  (\cdot, t_2)| \leq 1$.
So, by Proposition \ref{Prop:genmax}, we have for sufficiently small $\theta_6$
\[ | \nabla u_1 (\cdot, t_1 ) | \leq 1 + C R_0^{1/2} \sqrt{t_2 - t_1} . \]
So, using (\ref{eq:u1minusu22}), we obtain
\begin{multline*}
  d_{t_2} (x_1, x_2) - 10 C \sqrt{t_2 - t_1} \leq | u(x_1, t_1) - u(x_2, t_2) | \\
 \leq \big( 1 + CR_0^{1/2}  \sqrt{t_2 - t_1} \big) d_{t_1} (x_1, x_2)
  \leq \exp \big( CR_0^{1/2}  \sqrt{t_2 - t_1} \big) d_{t_1} (x_1, x_2).
\end{multline*}
This finishes the proof.
\end{proof}

Next, we remove the assumption $t_2 - t_1 \leq \theta_6 (1- t_1)$ from Lemma \ref{Lem:upperbound}.

\begin{Lemma} \label{Lem:betterupperbound}
For every $A < \infty$ there is a constant $C_7 = C_7 (A) < \infty$ such that the following holds:

Let $(\M^n, (g_t)_{t \in [-1, 1]})$ be a Ricci flow on a compact, $n$-dimensional manifold $\M$ with the property that $\nu[g_{-1}, 4] \geq - A$.
Assume that $|R| \leq R_0$ on $\M \times [-1, 1]$ for some constant $0 \leq R_0 \leq 1$.
Let $0 \leq t_1 \leq t_2 \leq 1$ and consider two points $x, y \in \M$.
Then
\[ d_{t_2} (x, y) \leq \exp \big({  C_7 R_0^{1/2} \sqrt{t_2 - t_1} } \big) d_{t_1} (x, y) + C_7 \sqrt{t_2 - t_1}. \]
\end{Lemma}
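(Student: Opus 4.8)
The plan is to deduce the general statement from Lemma \ref{Lem:upperbound} by iterating that estimate along a suitable partition of $[t_1,t_2]$. First I would reduce to the case $t_2 < 1$: since the flow $(g_t)_{t \in [-1,1]}$ is smooth, the function $t \mapsto d_t(x,y)$ is continuous on $[-1,1]$, so once the bound is known for all $t_2 < 1$ it follows for $t_2 = 1$ by letting $t_2 \nearrow 1$. Next, if $t_2 - t_1 \le \theta_6 (1-t_1)$, then Lemma \ref{Lem:upperbound} applies directly and it suffices to take $C_7 \ge C_6$; hence the only substantive case is $t_2 < 1$ and $t_2 - t_1 > \theta_6(1-t_1)$, which in particular forces $t_1 < 1$.

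In that case I would build a partition $t_1 = s_0 < s_1 < \dots < s_m = t_2$ on which each step is admissible for Lemma \ref{Lem:upperbound}. Concretely, set $\tau_0 := t_1$ and $\tau_{k+1} := \tau_k + \theta_6(1-\tau_k)$, so that $1 - \tau_k = (1-\theta_6)^k (1-t_1)$ and $\tau_k \nearrow 1$; let $m$ be the smallest index with $\tau_m \ge t_2$ (it exists and satisfies $m \ge 2$, since $\tau_1 = t_1 + \theta_6(1-t_1) < t_2 < 1 = \lim_k \tau_k$), and put $s_k := \tau_k$ for $k \le m-1$ and $s_m := t_2$. Then $[s_k, s_{k+1}] \subset [0,1)$ and $s_{k+1} - s_k \le \theta_6(1-s_k)$ for every $k$ — with equality for $k < m-1$, and for the last step $s_m - s_{m-1} = t_2 - \tau_{m-1} \le \tau_m - \tau_{m-1} = \theta_6(1-s_{m-1})$. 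Applying Lemma \ref{Lem:upperbound} on each $[s_k,s_{k+1}]$ and composing, using that every multiplicative factor $\exp(C_6 R_0^{1/2}\sqrt{s_{k+1}-s_k})$ is $\ge 1$, gives
\[
 d_{t_2}(x,y) \le P\, d_{t_1}(x,y) + C_6\, P \sum_{k=0}^{m-1}\sqrt{s_{k+1}-s_k}, \qquad P := \exp\Big(C_6 R_0^{1/2}\sum_{k=0}^{m-1}\sqrt{s_{k+1}-s_k}\Big).
\]

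Finally I would control the error sum. From $s_{k+1}-s_k \le \theta_6(1-\tau_k) = \theta_6(1-\theta_6)^k(1-t_1)$,
\[
 \sum_{k=0}^{m-1}\sqrt{s_{k+1}-s_k} \le \sqrt{\theta_6(1-t_1)}\sum_{k=0}^{\infty}(1-\theta_6)^{k/2} = \frac{\sqrt{\theta_6}}{1-\sqrt{1-\theta_6}}\,\sqrt{1-t_1} \le C_* \sqrt{t_2 - t_1},
\]
with $C_* := (1-\sqrt{1-\theta_6})^{-1}$, where the last inequality uses $1 - t_1 < \theta_6^{-1}(t_2-t_1)$, which holds precisely in the case under consideration. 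Since $R_0 \le 1$ and $t_2 - t_1 \le 1$ we then have $P \le e^{C_6 C_*}$, and the displayed inequality becomes
\[
 d_{t_2}(x,y) \le \exp\big(C_6 C_* R_0^{1/2}\sqrt{t_2-t_1}\big)\, d_{t_1}(x,y) + C_6 C_* e^{C_6 C_*}\sqrt{t_2-t_1},
\]
so the lemma holds with $C_7 := C_6 C_* e^{C_6 C_*}$ (note $C_* \ge 1$, so this also dominates $C_6$). The only delicate point is this last estimate: along a partition whose steps shrink geometrically the additive errors sum to $O(\sqrt{1-t_1})$ rather than obviously to $O(\sqrt{t_2-t_1})$, but this is harmless exactly because iteration is needed only when $1-t_1$ and $t_2-t_1$ are comparable. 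Everything else is routine bookkeeping on top of the one-step bound of Lemma \ref{Lem:upperbound}.
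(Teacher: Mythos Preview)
Your proof is correct and follows essentially the same approach as the paper: both iterate Lemma~\ref{Lem:upperbound} along the geometric partition $\tau_k$ with $1-\tau_k=(1-\theta_6)^k(1-t_1)$, bound the resulting sum $\sum_k\sqrt{\tau_{k+1}-\tau_k}$ by a constant times $\sqrt{1-t_1}$, and then use $1-t_1<\theta_6^{-1}(t_2-t_1)$ in the iterated case to convert this to $\sqrt{t_2-t_1}$. The only cosmetic difference is that you absorb the final step $[\tau_{m-1},t_2]$ into the partition from the start, whereas the paper first iterates to the last $t'_k\le t_2$ and then applies Lemma~\ref{Lem:upperbound} once more; the bookkeeping is otherwise identical.
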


\begin{proof}
In the case in which $t_2 - t_1 \leq \theta_6 (1-t_1)$, the bound follows immediately from Lemma \ref{Lem:upperbound}.
Let us now assume that $t_2 - t_1 > \theta_6 (1- t_1 )$.
By continuity we may also assume without loss of generality that $t_2 < 1$.

Choose times
\[ t'_k := 1 - (1- \theta_6)^k (1- t_1) \]
and observe that $t'_0 = t_1$ and
\[ t'_{k+1} - t'_k = \theta_6 ( 1- \theta_6)^k (1-t_1) = \theta_6 (1-  t'_k). \]
So by Lemma \ref{Lem:upperbound}
\begin{multline*}
 d_{t'_k} (x,y) \leq \exp \Big({ C_6 R_0^{1/2} \sum_{l=1}^k \sqrt{t'_l - t'_{l-1}} }\Big) d_{t_1} (x,y) \\
 + C_6 \sum_{l = 1}^k  \exp \Big({ C_6 R_0^{1/2} \sum_{j=l+1}^k \sqrt{t'_j - t'_{j-1}} }\Big) \sqrt{t'_l - t'_{l-1}}.
\end{multline*}
Since
\[ \sum_{l=1}^k \sqrt{ t'_l - t'_{l-1} } = \sum_{l=1}^k \sqrt{\theta_6} (1-\theta_6)^{l/2} \sqrt{1-t_1} \leq C'  \sqrt{1-t_1} \]
and
\begin{multline*}
  \sum_{l = 1}^k  \exp \Big({ C_6 R_0^{1/2} \sum_{j=l+1}^k \sqrt{t'_j - t'_{j-1}} }\Big) \sqrt{t'_l - t'_{l-1}} \\
  \leq \sum_{l = 1}^k  \exp \Big({  C_6 C' R_0^{1/2}  \sqrt{1-t_1} }\Big) \sqrt{t'_l - t'_{l-1}} \leq C'' \sqrt{1-t_1},
\end{multline*}
we find that for a generic constant $C < \infty$
\[ d_{t'_k} (x,y) \leq \exp \big({ C R_0^{1/2} \sqrt{1 - t_1} } \big) d_{t_1} (x,y) + C \sqrt{1 - t_1}. \]
Choose now $k$ such that $t'_k \leq t_2 < t'_{k+1}$.
Then $t_2 - t'_k \leq t'_{k+1} - t'_k \leq \theta_6 (1- t'_1)$, so again by Lemma \ref{Lem:upperbound}, we have
\begin{alignat*}{1}
 d_{t_2} (x, y) &\leq \exp \big({  C_6 R_0^{1/2} \sqrt{t_2 - t'_k} }\big) d_{t'_k} (x,y) + C_6 \sqrt{t_2 - t'_k} \\
 &\leq \exp \big({  (C+C_6)R_0^{1/2} \sqrt{1- t_1} }\big) d_{t_1} (x,y)  + C \exp ( 1 + C_6 ) \sqrt{1- t_1} + C_6 \sqrt{1-t_1}.
\end{alignat*}
The claim now follows using $\sqrt{1-t_1} < \theta_6^{-1/2} \sqrt{t_2 - t_1}$.
\end{proof}

We can finally prove Theorem \ref{Thm:main}.

\begin{proof}[Proof of Theorem \ref{Thm:main}]
Consider the Ricci flow $(\M^n, (g_t)_{t \in [0,1]})$ with $\nu [ g_0,  1+ A^{-1}] \geq - A$ and $|R| \leq R_0$ for $0 \leq R_0 \leq A$.
After replacing $A$ by $4A +2$, we may assume without loss of generality that $A > 2$ and that we even have $\nu [g_0, 1+ 4A^{-1}] \geq - A$.

We will first prove the distance bounds for the case in which $t_1 > 0$ and $t_2 \leq (1+A^{-1} ) t_1$.
By monotonicity of $\nu$ (compare with \cite[sec 2]{Bamler-Zhang}), we find that for any $t \in [0,1]$ we have
\[ \nu [g_t , 4A^{-1}] \geq \nu [g_0, 1+4A^{-1}] \geq - A. \]
Restrict the flow to the time-interval $[(1-A^{-1})t_1, (1+A^{-1})t_1]$ and parabolically rescale by $A^{1/2} t_1^{-1/2}$ to obtain a flow $(\widetilde{g}_t)_{t \in [A-1,A+1]}$.
Then $\nu [\widetilde{g}_{A-1} , 4] \ \geq - A$ and $| \widetilde{R} | \leq \widetilde{R}_0 := A^{-1} t_1 R_0 \leq 1$.
Then $t_1, t_2$ correspond to times $\widetilde{t}_1 := A , \widetilde{t}_2 := A t_1^{-1} t_2$ and we have
\[  \widetilde{R}_0^{1/2} \sqrt{\widetilde{t}_2 - \widetilde{t}_1} = R_0^{1/2} \sqrt{t_2 - t_1}. \]
So the distance bounds follow from Lemmas \ref{Lem:lowerbound} and \ref{Lem:betterupperbound}.

Consider now the case in which $t_2 >  (1+ A^{-1} ) t_1$.
So $t_1 <  \lambda t_2$, where $\lambda := (1 + A^{-1})^{-1} < 1$.
By continuity we may assume without loss of generality that $t_1 > 0$.
Then we can find $1 \leq k_2  < k_1$ such that $t_1 \in [\lambda^{k_1}, \lambda^{k_1 -1}]$ and $t_2 \in [\lambda^{k_2}, \lambda^{k_2-1}]$.
Using our previous conclusions, we find
\[ d_{t_2} (x, y) \geq d_{\lambda^{k_2}} (x,y) - C \sqrt{\lambda^{k_2}} \geq \ldots \geq d_{t_1} (x,y) - C \sum_{l=k_1}^{k_2} \sqrt{\lambda^{l}} \geq d_{t_1} (x,y) - C' C \lambda^{k_2/2}. \] 
Since $t_1 < \lambda t_2$, we have $\sqrt{t_2 - t_1} > \sqrt{(1- \lambda ) t_2} > \sqrt{1-\lambda} \sqrt{\lambda^{k_2 }}$.
So
\[ d_{t_2} (x,y) \geq d_{t_1} (x,y) - C' C (1-\lambda)^{-1/2} \sqrt{t_2 - t_1}. \]
This establishes the lower bound.

For the upper bound, set $t'_0 := t_1$, $t'_1 := \lambda^{k_1 -1}$, \ldots, $t'_{k_1 - k_2} := \lambda^{k_2}$, $t'_{k_1 - k_2 + 1} := t_2$.
Then we have by our previous conclusions
\begin{multline*}
 d_{t_2} (x,y) \leq \exp \Big({  C R_0^{1/2} \sum_{l=1}^{k_1 - k_2 + 1} \sqrt{t'_{l} - t'_{l-1}} }\Big) d_{t_1} (x,y) \\
 + C \sum_{l=1}^{k_2 - k_1 + 1} \exp \Big({  C R_0^{1/2} \sum_{j=l+1}^{k_1 - k_2 + 1} \sqrt{t'_j - t'_{j-1}} }\Big) \sqrt{t'_l - t'_{l-1}}
\end{multline*}
Similarly as in the proof of Lemma \ref{Lem:betterupperbound}, we conclude
\[ d_{t_2} (x,y) \leq \exp \Big({  C R_0^{1/2} \sqrt{\lambda^{k_2}} }\Big)  d_{t_1} (x,y) + C \sqrt{\lambda^{k_2}}. \]
Again, using $\sqrt{t_2 - t_1} > \sqrt{1-\lambda} \sqrt{\lambda^{k_2}}$, we get the desired bound.
\end{proof}

\section{Proof of Corollary \ref{Cor:limits}} \label{sec:Cor2}

\begin{proof}[Proof of Corollary \ref{Cor:limits}]
For each $i$ consider the metric $\overline{d}^i$ on $\M^i$ with
\[ \overline{d}^i (x,y) := \int_0^1 d^i_t (x,y) dt. \]
Note that by the H\"older bound in Theorem \ref{Thm:main} there is a uniform constant $c' > 0$ such that for all $t, t' \in [0,1]$ we have $d^i_{t'} (x,y) > \frac12 d^i_t (x,y)$ whenever $|t-t'| \leq c' (d^i_t (x,y))^2$.
So there is a uniform constant $c > 0$ such that for all $t \in [0,1]$
\begin{equation} \label{eq:dbardt}
 \overline{d}^i (x,y) \geq c \big( \min \{ d^i_t (x,y), 1 \} \big)^3. 
\end{equation}
So by the triangle inequality and Theorem \ref{Thm:main}, for any $A < \infty$ there is a constant $C < \infty$ such that for any $x,y, x', y' \in \M$ and $t, t' \in [0,1]$ with $\overline{d}^i (x,y) + \overline{d}^i (x, x') + \overline{d}^i (y, y') < A$ we have
\begin{equation} \label{eq:equicontinuous}
 \big|{ d^i_t (x,y) - d^i_{t'} (x', y') }\big| \leq C \big( \overline{d}^i(x,x') \big)^{1/3} + C \big( \overline{d}^i (y, y') \big)^{1/3} + C |t- t'|^{1/2}.
\end{equation}

We first argue that the sequence $(\M^i, \overline{d}^i)$ is uniformly totally bounded in the following sense: For any $0 < a < b$ there is a number $N = N(a,b) < \infty$ such that for any $i$ and any $x \in \M^i$, the ball $\overline{B}^i (x, b) := \{ x \in \M^i \;\; : \;\; \overline{d}^i (x,z) < b \}$ contains at most $N$ pairwise disjoint balls $\overline{B}^i (y_j, a)$, $j = 1, \ldots, m$.
Fix $0 < a < b$ and assume without loss of generality that $a < 1$.
By (\ref{eq:dbardt}) there is a constant $b' = b'(b) < \infty$ such that $\overline{B}^i (x,b) \subset B^i(x, t, b')$ for all $t \in [0,1]$.

Assume that $y_1, \ldots, y_m \in \overline{B}^i(x,b)$ such that the balls $\overline{B}^i (y_j, a)$ are pairwise disjoint.
This implies $\overline{d}^i (y_{j_1}, y_{j_2}) \geq 2a$ for all $j_1 \neq j_2$.
By the H\"older bound in Theorem \ref{Thm:main}, we may find a large integer $L = L(a) < \infty$ such that whenever $\overline{d}^i (y, y') \geq 2a$ for some points $y, y' \in \M^i$, then $d^i_{\frac{l}{L}} (y, y') > a$ for some $l \in \{ 1, \ldots, L \}$.
So for any $j_1 \neq j_2$, there is an $l_{j_1, j_2} \in \{ 1, \ldots, L \}$ such that
\[ d^i_{\frac{l_{j_1, j_2}}{L}} (y_{j_1}, y_{j_2}) > a. \]
This implies the following statement:
If we form the $L$-fold Cartesian product $\M^{i, L} := (\M^i)^L = \M \times \ldots \times \M$ equipped with the metric $g^i_{\frac{1}{L}} \oplus \ldots \oplus g^i_{\frac{L-1}L}$ and if we define $y^L_j := (y_j, \ldots, y_j) \in \M^{i,L}$, then $d^{\M^{i,L}} (y^L_{j_1}, y^L_{j_2} ) > a$ for any $j_1 \neq j_2$.
So the $\frac12 a$-balls around $y^L_{j_1}$ are pairwise disjoint and contained in $B^i (x, \frac1{L}, b' + a) \times \ldots \times B^i (x, \frac{L-1}{L}, b' +a)$.
Using (\ref{eq:upperandlowervolbound}) and Lemma \ref{Lem:volBr}, we conclude that
\[ \Big(c \Big( \frac{a}{\sqrt{L}} \Big)^n \Big)^{L} \cdot m \leq  \Big( C_0 (b')^n e^{C_0 b'} \Big)^L, \]
which yields an upper bound on $m$.
So the sequence $(\M^i, \overline{d}^i)$ is in fact uniformly totally bounded.

We may now pass to a subsequence and assume that $(\M^i, \overline{d}^i, x_i)$ converges to some metric space $(\overline{\M}, \overline{d}, \overline{x})$ in the pointed Gromov-Hausdorff sense.
By (\ref{eq:equicontinuous}) and Arzel\'a-Ascoli and after passing to another subsequence, the sequence of time-dependent metrics $(d^i)_{t \in [0,1]}$ converges locally uniformly to a time-dependent, continuous family of pseudometrics $(d^\infty_t)_{t \in [0,1]}$ on $\overline{\M}$.
So for any $t \in [0,1]$, the pointed metric spaces $(\M^i, d^i_t, x_i)$ converge to $(\overline{\M} /{\sim_t}, d^\infty_t, \overline{x})$ in the pointed Gromov-Hausdorff sense.
Passing to another subsequence once again, and using (\ref{eq:upperandlowervolbound}), we can ensure that also the volume forms $dg^i_t$ converge uniformly for every rational $t \in [0,1]$.
Since $e^{-A |t_2 - t_1|}  dg^i_{t_1} \leq dg^i_{t_2} \leq  e^{A |t_2 - t_1|} dg^i_{t_1}$, the convergence holds for any $t \in [0,1]$.
The doubling property for balls of radius less than $\sqrt{t}$ follows from (\ref{eq:upperandlowervolbound}) after parabolic rescaling by $(\frac12 t)^{-1/2}$.
\end{proof}

\begin{bibdiv}
\begin{biblist}

\bib{Bamler-Zhang}{article}{
   author={Bamler, Richard},
   author={Zhang, Qi},
   title={Heat kernel and curvature bounds in Ricci flows with bounded scalar curvature},
date={2015},
eprint={http://arxiv.org/abs/1501.01291}}

\bib{Colding-Naber}{article}{
   author={Colding, Tobias Holck},
   author={Naber, Aaron},
   title={Sharp H\"older continuity of tangent cones for spaces with a lower
   Ricci curvature bound and applications},
   journal={Ann. of Math. (2)},
   volume={176},
   date={2012},
   number={2},
   pages={1173--1229},
   issn={0003-486X},
   review={\MR{2950772}},
   doi={10.4007/annals.2012.176.2.10},
}

\bib{Chen-Wang-13}{article}{
   author={Chen, Xiuxiong},
   author={Wang, Bing},
   title={On the conditions to extend Ricci flow(III)},
   journal={Int. Math. Res. Not. IMRN},
   date={2013},
   number={10},
   pages={2349--2367},
   issn={1073-7928},
   review={\MR{3061942}},
}

\bib{PerelmanI}{article}{
   author={Perelman, Grisha},
   title={The entropy formula for the Ricci flow and its geometric applications},
date={2002},
eprint={http://arxiv.org/abs/math/0211159}}

\bib{Zhang-12}{article}{
   author={Zhang, Qi S.},
   title={Bounds on volume growth of geodesic balls under Ricci flow},
   journal={Math. Res. Lett.},
   volume={19},
   date={2012},
   number={1},
   pages={245--253},
   issn={1073-2780},
   review={\MR{2923189}},
   doi={10.4310/MRL.2012.v19.n1.a19},
}

\end{biblist}
\end{bibdiv}

\end{document}